\newcommand{\bs}{\backslash}
\newcommand{\R}{\mathbb{R}}
\newcommand{\Pn}{\mathcal{P}}
\newcommand{\F}{\mathcal{F}}
\newcommand{\toz}[1]{\quad\mbox{as $#1 \to 0$}}
\newcommand{\ub}[1]{^{(#1)}}
\newcommand{\K}{\mathcal{K}}
\newcommand{\subsc}[1]{_{\mbox{\scriptsize #1}}}
\newcommand{\ul}[1]{\underline{#1}}
\begin{document}

\title{Minimal transport networks with general boundary conditions}

\author{Shyr-Shea Chang%
\thanks{Dept. of Mathematics, University of California Los Angeles, Los Angeles, CA 90095, USA.}%
\and
Marcus Roper%
\footnotemark[1]
\thanks{Dept. of Biomathematics, University of California Los Angeles, Los Angeles, CA 90095, USA.}
}

\maketitle

\begin{abstract}
Vascular networks are used across the kingdoms of life to transport fluids, nutrients and cellular material. A popular unifying idea for understanding the diversity and constraints of these networks is that the conduits making up the network are organized to optimize dissipation or other functions within the network. However the general principles governing the optimal networks remain unknown. In particular Durand \cite{durand2007structure} showed that under Neumann boundary conditions networks, that minimize dissipation should be trees. Yet many real transport networks, including capillary beds, are not simply connected. Previously multiconnectedness in a network has been assumed to provide evidence that the network is not simply minimizing dissipation. Here we show that if the boundary conditions on the flows within the network are enlarged to include physical reasonable Neumann and Dirichlet boundary conditions (i.e. constraints on either pressure or flow) then minimally dissipative networks need not be trees. To get to this result we show that two methods of producing optimal networks, namely enforcing constraints via Lagrange multipliers or via penalty methods, are equivalent for tree networks.
\end{abstract}

\section{Introduction} \label{sec:intro}

Organisms across the kingdoms of life; including plants, animals, fungi, and watermolds rely on vascular networks to transport fluids, nutrients or cellular materials \cite{reece2011campbell}. In vertebrate animals, a cardiovascular network transports oxygenated blood from the heart to tissues throughout the body, and returns waste gases to the heart and lungs. Distruption of this network even at the level of finest vessels, including the systemic microvessel degradation associated with diabetes mellitus, or acute damage associated with traumatic brain injury, has long term irreparable health consequences. Accordingly parallel experimental efforts have targeted the same goal of complete mapping of microvascular networks \cite{blinder2013cortical,wu20143d,kelch2015organ}. Yet interpreting these data streams is held back by lack of information on the organizing principles underlying the mapped networks.

One principle that has been used to dissect these networks is Murray's law (Murray, 1926 \cite{murray1926physiological}). Murray's law states that if a network made up of hydraulic conduits minimizes a total cost made up of the sum of the total dissipation and of the material used to build the network, then the radius of each conduit within the network is proportional to the cube root of the flow that it carries. Murray's law has been verified by studies on plant and mammalian vascular networks (\cite{sherman1981connecting,mcculloh2003water,taber2001investigating}, but also see \cite{sherman1981connecting} for a discussion of exemplar networks that do not apparently obey Murray's law). The derivation of this result draws several assumptions that we will systematically analyze in this paper, so we present a brief derivation here. Consider a cylindrical tube with radius $r$ and length $\ell$ with a flow $f$ going through. By flow we mean that a volume $f$ of fluid (e.g. blood) passes through each cross-section of the network in unit time. In appropriate units, the energy cost of maintaining the vessel can be written as

\begin{equation}
E = D + ar^2\ell = f^2 R + ar^2\ell
\end{equation}
where $D = f^2 R$ is the dissipation, $R$ is the hydraulic resistance, and $a$ is the energy cost for maintaining unit volume of blood. Under Hagen-Poiseuille's law $R = \frac{8\mu \ell}{\pi r^4}$ where $\mu$ is the viscosity of the blood. Suppose $r$ is tuned such that the energy cost is minimized under fixed amount of inflow $f$. Then the derivative of $E$ over $r$ should vanish, i.e.

\begin{equation}\frac{dE}{dr} = 0\Rightarrow -\frac{32 f^2 \mu \ell}{\pi r^5} + 2ar\ell = 0 \Rightarrow f = \sqrt{\frac{a \pi}{16\mu}} r^3
\end{equation}
and hence, as claimed, $r\;\propto\; f^\frac{1}{3}$.

A key part of this derivation is that changing the radius of the vessel does not affect the flows passing through it. In other words, flows and radii can be treated as independent variables. However the flows within a network generally depend on the conductances within the network -- so changing radii of vessels within the network may alter the flows. Accordingly it is not obvious that when the feedback between vessel radius and flow is considered, i.e. when conduits are considered assembled within a network, Murray's law will continue to hold, or that a dissipation minimizing network configuration actually exists.

Durand \cite{durand2007structure} studied optimization of dissipation on networks in which multiple sources were linked to multiple edges with arbitrarily complex network of edges and vertices. A prior set of edges can be assigned (potentially including straightline paths between every pair of sources and or sinks), and one searches for the network that uses some, but not necessarily all of the prior edges, that minimizes the total dissipation for a prescribed material cost. This approach, in which material is prescribed as a holonomic constraint and a minimally dissipative network is sought consistent with this constraint, is not obviously equivalent, in the sense of producing the same family of optimal networks, as Murray's approach, which we may view as a penalty function method for optimizing dissipation under material cost. But it has been adopted in many recent works on optimal networks \cite{durand2007structure,bohn2007structure,katifori2010damage}. Durand showed that any network that solves this optimization problem must be simply connected. However the proof given in \cite{durand2007structure} leaves unclear what combinations of boundary conditions are allowed in such networks. Significantly we can quickly see that for some combinations of boundary conditions the minimally dissipative network is not simply connected. For example consider a square network made of 4 edges and 4 vertices, all of which have pressure specified. Vertices and edges are numbered as shown in Figure \ref{fig:diss_loop}. In this network flows can be determined locally, i.e. the flow on one link does not depend on flows on others. Specifically

\begin{figure}[t]
\begin{center}

\includegraphics[width = 7cm]{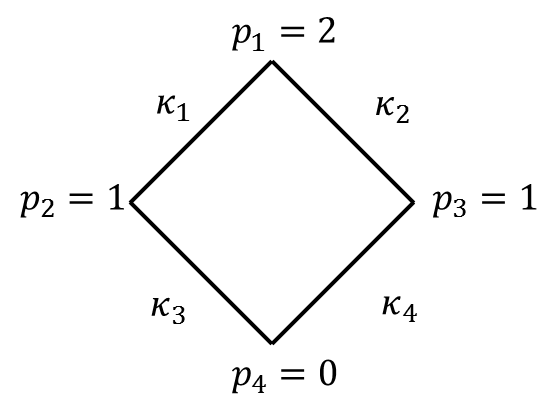}
\caption{A non-tree like minimal dissipation network.}
\label{fig:diss_loop}

\end{center}
\end{figure}

\begin{equation}
Q_1 = \kappa_1, Q_2 = \kappa_2, Q_3 = \kappa_3, Q_4 = \kappa_4.
\end{equation}
The total dissipation within the network is

\begin{equation}
D = \sum_{i=1}^4 \kappa_i.
\end{equation}
We follow Durand by specifying the total material available to build the network. Since all edges have the same length this constraint takes the form $\sum_i r^2_i = \mbox{const}$, where $R_i$ is the radius of edge $i$. Now since by the Hagen-Poiseuille law $\kappa_i\propto r^4_i$, we may equivalently write the constraint in the form:

\begin{equation}
K^\frac{1}{2} = \sum_{i=1}^4 \kappa^\frac{1}{2}_i,
\end{equation}
for some $K>0$. To minimize dissipation under the material constraint we write the dissipation in the network and add a Lagrange multiplier to enforce the material constraint:

\begin{equation}
\Theta = \sum_{i=1}^4 \kappa_i + \lambda (\sum_{i=1}^4 \kappa^\frac{1}{2}_i - K^\frac{1}{2}).
\end{equation}
We find the optimal conductances within the network by setting equal to 0 each of the partial derivatives of $\Theta$ with respect to the variables $\{\kappa_i\}$ in the form:

\begin{equation}
0 = \frac{\partial\Theta}{\partial\kappa_i} = 1 +\frac{\lambda}{2}\kappa^{-\frac{1}{2}}_i \Rightarrow \kappa_i = \frac{\lambda^2}{4} \qquad \forall 1\leq i \leq 4.
\end{equation}
The Lagrange multiplier $\lambda$ can be determined from the material constraint:

\begin{equation}
K^\frac{1}{2} = \sum_{i=1}^4 \kappa^\frac{1}{2}_i = 2\lambda \Rightarrow \lambda = \frac{K^\frac{1}{2}}{2}.
\end{equation}
We have therefore identified a candidate local extremum with $\kappa_i > 0 \; \forall 1\leq i \leq 4$. but this local extremum might not be the global minimizer. The set on which we need to minimize the dissipation, i.e. $\{(\kappa_1,\kappa_2,\kappa_3,\kappa_4)| \sum_{i=1}^4 \kappa_i^\frac{1}{2} = K^\frac{1}{2}\}$, is compact, so the global minimum must be attained either at the local extremum, or on one of the set boundaries $\kappa_i = 0$ for some $1\leq i \leq 4$. To analyze the dissipation on domain boundaries we can simply assume that $n\leq 4$ conductances are positive and recalculate $\lambda$ in the same fashion:

\begin{equation}
K^\frac{1}{2} = \sum_{i=1}^4 \kappa^\frac{1}{2}_i = \frac{n}{2}\lambda \Rightarrow \lambda = \frac{2K^\frac{1}{2}}{n}.
\end{equation}

\noindent Now we can calculate the dissipation and see which $n$ gives the lowest dissipation (let $\K \subseteq {1,2,3,4}$ be the set of positive conductances so $|\K| = n$):

\begin{equation}
D = \sum_{i\in \K} \frac{1}{4}\lambda^2 =   \sum_{i\in \K} \frac{K}{n^2} = \frac{K}{n}
\end{equation}

\noindent so $n = 4$ indeed results in minimal dissipation network; consisting of a single loop through all four vertices. Note additionally that, on this prior network, treating material costs as holonomic constraint or penalty function does not produce equivalent results. Indeed the sum of dissipation and material consts is trivially minimized in a network in which all edges have been eliminated.

The above example shows that minimizing dissipation on a network with multiple pressure boundary conditions produces a multiply connected, i.e. non-tree network. The relevance of the example network shown in Figure \ref{fig:diss_loop} to real biological transport network design may seem unconvincing; however even quite simple networks commonly used as models for biological transport can exhibit non-equivalent optima under the different formulations for material costs. To see how substantial the difference can be we can consider a simple network comprising two edges (Fig.~\ref{fig:contraint_penalty}A) and minimizing

\begin{figure}[t]
\begin{center}

\includegraphics[width = 15cm]{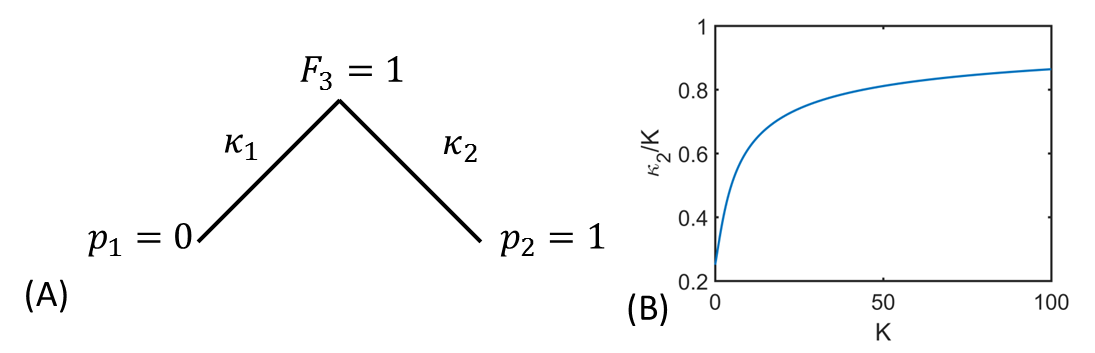}
\caption{The different formulations of imposing material as constraint or penalty function affect the optimal network for the same target function. (A) A network in which a vertex with prescribed inflow, $F_3=1$, is connected to two vertices on which pressures are prescribed. (B) The asymmetry of the network increases as the total prescribed material $K$ increases, as predicted by the asymptotic analysis in Section \ref{sec:intro}.}
\label{fig:contraint_penalty}

\end{center}
\end{figure}

\begin{equation}
f = \sum_{i=1}^2 (Q_i-\frac{1}{2})^2.
\end{equation}
This target function is inspired by our own studies of flow in microvascular networks, which have shown that uniform partitioning of flows in microvessels is prioritized over transport costs \cite{chang2015optimal}. By minimizing $f$ we are uniformizing the flows going through the edges to the pressure vertices. We compare the solutions from following either of our optimization approaches. First we treat the material cost as a penalty function, i.e. follow Murray's formulation, and minimize

\begin{equation}
\Theta = \sum_{i=1}^2 (Q_i-\frac{1}{2})^2 + a\sum_{i=1}^2 \kappa^\frac{1}{2}_i.
\end{equation}
The pressure at the flow vertex is determined by Kirchhoff's first law, which states that the flows along the two edges must sum to the inflow at vertex 3, i.e.:

\begin{equation}
p_3 \kappa_1 + (p_3-1)\kappa_2 = 1 \Rightarrow p_3 = \frac{1+\kappa_2}{\kappa_1 + \kappa_2}
\label{eq:two_edge_Kirchhoff}
\end{equation}
The total cost function $\Theta$ can be rewritten, after $p_3$ is solved for by Equation \ref{eq:two_edge_Kirchhoff}, as

\begin{equation}
\Theta = \left( \frac{(1+\kappa_2)\kappa_1}{\kappa_1+\kappa_2} - \frac{1}{2}\right)^2 + \left(\frac{(1-\kappa_1)\kappa_2}{\kappa_1+\kappa_2} - \frac{1}{2}\right)^2 + a(\kappa^\frac{1}{2}_1 + \kappa_2^\frac{1}{2}).
\end{equation}
We will show that $\Theta$ does not have a minimizer. First notice $(\kappa_1,\kappa_2) = (0,0)$ is not allowed since $p_3$ cannot be determined in this case. The minimum value of $\Theta$ is zero, and $(\kappa_1,\kappa_2) = (0,0)$ is the only configuration of the network that might achieve this value since otherwise $\kappa^\frac{1}{2}_1 + \kappa_2^\frac{1}{2}>0$. It suffices to show that we can find networks with $\Theta >0$ arbitrarily close to zero. If we let $\kappa_1=\kappa_2 = \epsilon>0$ then

\begin{equation}
\Theta = \frac{\epsilon^2}{2} + 2a\epsilon^\frac{1}{2}\to 0\qquad\toz{\epsilon}
\end{equation}
and we showed that $\Theta$ does not have a minimizer. On the other hand if we impose the total material as a constraint we have

\begin{equation}
\Theta = ( \frac{(1+\kappa_2)\kappa_1}{\kappa_1+\kappa_2} - \frac{1}{2})^2 + (\frac{(1-\kappa_1)\kappa_2}{\kappa_1+\kappa_2} - \frac{1}{2})^2
\end{equation}
where

\begin{equation}
\kappa_1^\frac{1}{2}+\kappa_2^\frac{1}{2} = K^\frac{1}{2}
\end{equation}
with a predetermined total material $K$. A minimum will happen if $Q_1 = Q_2 = \frac{1}{2}$ with the material constraint satisfied, so we may as well start from the equation

\begin{equation}
\frac{\kappa_1+\kappa_1\kappa_2}{\kappa_1+\kappa_2}=\frac{1}{2} \Rightarrow \kappa_1 = \frac{\kappa_2}{1+2\kappa_2}
\end{equation}
(the other equation is redundant since $Q_1+Q_2=1$). The material constraint then reads

\begin{equation}
[1+(1+2\kappa_2)^\frac{1}{2}]\frac{\kappa^\frac{1}{2}_2}{(1+2\kappa_2)^\frac{1}{2}} = K^\frac{1}{2}.
\end{equation}
This equation does not admit an analytical solution, but since the left hand side is monotonically increasing with $\kappa_2$ and can take any value between 0 and $\infty$, it can be solved for any finite $K>0$. In particular asymptotic solutions can be obtained as $K\to 0^+$ and as $K\to \infty$. When $K \ll 1$ we have $\kappa_2 \leq K \ll 1$ so

\begin{equation}
K^\frac{1}{2} = [1+(1+2\kappa_2)^\frac{1}{2}]\frac{\kappa^\frac{1}{2}_2}{(1+2\kappa_2)^\frac{1}{2}} \sim 2\kappa^\frac{1}{2}_2 \Rightarrow \kappa_1=\kappa_2 = \frac{K}{4}.
\end{equation}
In the case of $K\gg 1$ if we assume $\kappa_2\gg 1$ we can obtain

\begin{equation}
K^\frac{1}{2} = [1+(1+2\kappa_2)^\frac{1}{2}]\frac{\kappa^\frac{1}{2}_2}{(1+2\kappa_2)^\frac{1}{2}} \sim \kappa^\frac{1}{2}_2 \Rightarrow \kappa_2 \sim K.
\end{equation}
Therefore the increase in total material $K$ increases the network asymmetry $\frac{\kappa_2}{\kappa_1}$, as also suggested by numerical results (Fig.~\ref{fig:contraint_penalty}B). From this example we can see not only the constraint formulation might result in different network from that of penalty function formulation, but even when using the constraint formulation key qualitative features of optimal network may depend on the total material allocated to the network, a fact that has apparently not received scrutiny.

In this paper we will discuss the consequences of general boundary conditions on optimal networks, as well as the effect of different formulations of material cost. We will focus on networks minimizing transport costs, since these have recieved the most attention to date \cite{bohn2007structure,katifori2010damage,banavar2000topology}. We show that under the most general boundary conditions pathologies associated with minimizing dissipation are overcome if one instead minimizes a complementary energy that includes work done by pressure vertices. A network with minimal complementary energy is simply connected for all boundary conditions, a property previously only proven for minimally dissipative networks with Neumann boundary conditions. Networks optimizing complementary energy resolve pathological networks like the one in Fig.~\ref{fig:diss_loop} by disconnecting pressure vertices with the same pressure. The complementary energy reduces to dissipation when all the pressure vertices have the same pressure, so previous theoretical results for optimal networks are recovered. If at least one vertex with Neumann boundary condition is present minimally dissipative networks will disconnect all the Dirichlet vertices from each other, so ultimately our results provide a formal proof that minimally dissipative networks satisfy Murray's law, are simply connected, and disconnect pressure vertices under this narrower set of boundary conditions.

Throughout we model material costs via holonomic constraints, rather than using Murray's original approach of using penalty functions. The final leg of our argument is to elucidate the conditions under which the two formulations are equivalent; that is, they produce the same family of optimal networks as the cost or penalty parameters are varied. In particular we show that the two formulations are equivalent if the network flows are not affected by uniform rescaling of conductances, a property held by any network in which all pressure vertices have identical pressures, including any network that minimizes the complementary energy.

Taken together, our results comprehensively expose the effect of boundary conditions, especially vertices with specified pressures, and of formulations of material cost on minimally dissipative networks. It also suggests an energy function that incorporates the work done by pressure vertices that may be a more suitable target function for optimization than dissipation.

\section{Notation} \label{sec:notation}

\begin{figure}[t]
\begin{center}

\includegraphics[width = 7cm]{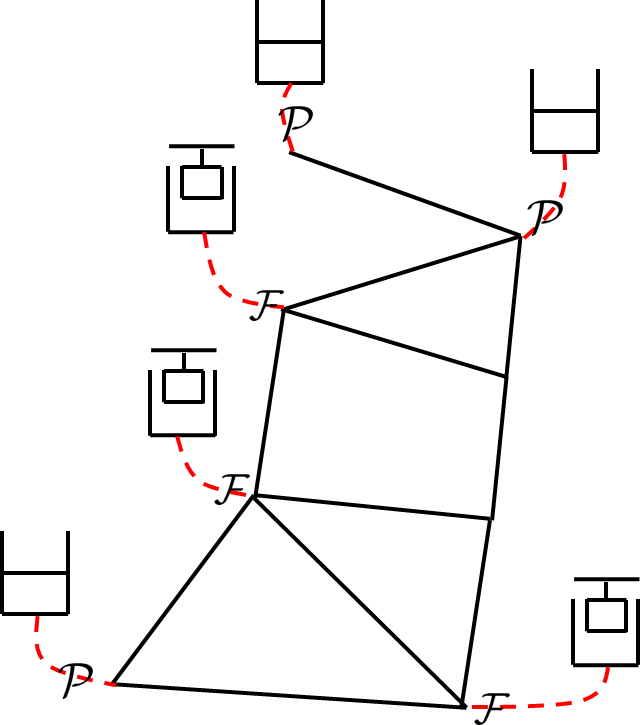}
\caption{A network diagram showing Dirichlet (pressure) vertices $\Pn$ and Neumann (flow) vertices $\F$, along with vertices where no boundary condition is imposed.}
\label{fig:network_diagram}

\end{center}
\end{figure}

In this work we consider a set of vertices $k = 1,...,V$ that connect to each other by vessels or edges. We indicate that vertices are neighbors in the network by writing $\langle k,l\rangle =1$ if vertices $k,l$ are linked by an edge and $\langle k,l\rangle = 0$ otherwise. This relation between vertices is symmetric, in the sense that $\langle k,l\rangle = \langle l,k\rangle$. If $\langle k,l\rangle =1$ a non-negative conductance $\kappa_{kl}$ and a flow $Q_{kl}$ are associated with the edge, with $\kappa_{kl} = \kappa_{lk}$ and $Q_{kl} = -Q_{lk}$. We model flows within hydraulic networks by assuming that a pressure $p_k$ can be assigned to each vertex $k$ and there is a linear relation between flow and pressure difference, i.e. $Q_{kl} = (p_k-p_l)\kappa_{kl}$. At every Neumann vertex the conservation of mass has to hold, i.e. $\sum_{l\,:\, \langle k,l\rangle =1} Q_{kl} = q_k$, where $q_k$ is the flow into the network at vertex $k$. We divide the vertices of the network into two classes: Neumann vertices where flow into the network is known, and Dirichlet vertices at which pressure is prescribed. Vertices that are not connected to external fluid sources, sinks or reservoirs are typically of Neumann type, with inflow $q_k = 0$ (Fig.~\ref{fig:network_diagram}). Let $\Pn$ denote the set of pressure (ore Dirichlet) vertices and $\F$ the set of flow (or Neumann) vertices (we require $\Pn\cap \F = \phi$). For definiteness we say $k\notin \Pn\cup \F$ if no boundary condition is imposed, i.e. $\sum_{l\,:\, \langle k,l\rangle =1} Q_{kl} = 0$. A Kirchhoff flow is defined as the flow $Q_{kl} = (p_k-p_l)\kappa_{kl} \; \forall \langle k,l\rangle =1$, where the pressures satisfy

\begin{equation}
\left\{\begin{array}{lll}
\sum_{l\,:\, \langle k,l\rangle =1} (p_k-p_l)\kappa_{kl} = 0 & k\notin \Pn\cup \F\\
p_k = \bar{p_k} & k\in \Pn\\
\sum_{l\,:\, \langle k,l\rangle =1} (p_k-p_l)\kappa_{kl} - q_k = 0 & k\in \F
\end{array}\right..
\end{equation}

\noindent It is well-known that for connected networks, i.e. $\forall 1\leq k,l\leq V$ we can devise a path from $k$ to $l$; that is: $\exists k_1,k_2,...,k_n$ s.t. $\langle k,k_1\rangle = \langle k_i,k_{i+1}\rangle= \langle k_n,l\rangle = 1 \; \forall 1\leq i \leq n-1$ and $\kappa_{k k_1},\kappa_{k_i k_{i+1}},\kappa_{k_n l}>0 \; \forall 1\leq i \leq n-1$, and $\Pn \neq \phi$, then the pressures are uniquely determined and therefore is the Kirchhoff flow \cite{LP:book}. In case $\Pn = \phi$ the Kirchhoff flow is uniquely determined so long as $\sum_{k\in \F} q_k = 0$, and pressures are determined up to an additive constant. If the condition on total inflow is violated there is no solution for $p_k$'s and the pressures are ill-defined. This result is quite important for developing intuition about the role of Dirichlet vertices in networks so we give a proof in the Appendix. On the otherhand if the network is not connected it consists of finitely many connected components, and each component would have to satisfy the condition for the Kirchhoff flow to be uniquely determined. We define a {\it physical network} to be a network with whose conductances $\kappa_{kl}$ and pressure conditions admitting a unique Kirchhoff flow solution.\\

\section{Results} \label{sec:results}

In this section we state the main results of this paper, identifying several properties of physical networks that globally minimize the dissipation and the complementary dissipation defined below:

\begin{definition}
The dissipation function given flows $Q_{kl}$ and conductances $\kappa_{kl}$ for $\langle k,l\rangle =1$ is defined by

\begin{equation}
D = \sum_{k>l,\langle k,l\rangle =1} \frac{Q_{kl}^2}{\kappa_{kl}}
\label{func:diss}
\end{equation}

\end{definition}

\begin{definition}
The complementary dissipation of a network given flows $Q_{kl}$ and conductances $\kappa_{kl}$ for $\langle k,l\rangle =1$ is defined by

\begin{equation}
f = \sum_{k>l,\langle k,l\rangle =1} \frac{Q_{kl}^2}{\kappa_{kl}} - 2\sum_{k\in \Pn} p_k \sum_{l\, :\, \langle k,l\rangle =1} Q_{kl}
\label{func:tot_energy}
\end{equation}

\end{definition}
We call $f$ the complementary dissipation because it resembles the complimentary energy, which in linear elasticity allows the displacement field to be calculated via minimization of a function. Notably this function, the complementary energy, is defined to be equal to the stored internal elastic energy minus the work done by any external traction, which is similar to our expression (rate of dissipation minus twice the rate of working of external tractions). We introduce the material constraint as

\begin{equation}
K = \sum_{k>l,\langle k,l\rangle =1} \kappa^\frac{1}{2}_{kl} d_{kl}.
\label{eq:material_constraint_d}
\end{equation}

\noindent Here $d_{kl} = \ell_{kl}^\frac{3}{2}$, where $\ell_{kl}$ is the length of link $kl$ in the hydraulic network. The $d_{kl}$ can be any set of positive weights for generality. A fundamental question is whether a global minimizer of dissipation (\ref{func:diss}) or complementary dissipation (\ref{func:tot_energy}) exists under material constraint or penalty:

\begin{proposition} \label{prop:existence_global_minimizer}
Suppose the network topology and boundary conditions are physical, i.e. $\kappa_{kl} >0 \; \forall \langle k,l\rangle = 1$ results in a physical network. Then there exists a physical network that globally minimizes dissipation (\ref{func:diss}) or complementary dissipation (\ref{func:tot_energy}) under material constraint (\ref{eq:material_constraint_d}). In addition there exists a physical network that minimizes dissipation (\ref{func:diss}) under material penalty, i.e. minimizes

\begin{equation}
\Theta = \sum_{k>l,\langle k,l\rangle =1} \frac{Q_{kl}^2}{\kappa_{kl}} + a\sum_{k>l,\langle k,l\rangle =1} \kappa^\frac{1}{2}_{kl} d_{kl}
\label{eq:constraint_material_penalty}
\end{equation}
under no constraint.

\end{proposition}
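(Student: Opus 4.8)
The plan is to regard both objectives as functions of the conductance vector $\kappa = (\kappa_{kl})_{\langle k,l\rangle = 1}$ alone, with the Kirchhoff pressures and flows viewed as $\kappa$-dependent quantities, and to obtain existence from the direct method: compactness of the feasible set together with lower semicontinuity of the objective. The first step is to rewrite the dissipation in the non-singular form $D = \sum_{k>l}(p_k-p_l)^2\kappa_{kl}$, using $Q_{kl} = (p_k-p_l)\kappa_{kl}$, so that no $0/0$ ambiguity arises when an edge is deleted. The constraint set $S = \{\kappa\geq 0 : \sum_{k>l}\kappa_{kl}^{1/2}d_{kl} = K\}$ is closed and bounded (each $\kappa_{kl}\leq(K/d_{kl})^2$), hence compact. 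I extend $D$ and $f$ to all of $S$ by assigning the value $+\infty$ to any $\kappa$ whose subgraph of positive-conductance edges fails to be physical; since the all-positive configuration is physical by hypothesis, after rescaling to meet the constraint the infimum over $S$ is finite.

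Next I would establish lower semicontinuity of the extended $D$ on $S$. On the physical part of $S$ --- including boundary configurations where some $\kappa_{kl}=0$ but every connected component still contains a Dirichlet vertex or carries zero net Neumann inflow --- the Kirchhoff pressures depend continuously on $\kappa$ (the defining linear system has a unique solution there), pressures stay bounded, and edges with $\kappa_{kl}\to 0$ contribute $(p_k-p_l)^2\kappa_{kl}\to 0$, so $D$ is continuous. The only remaining issue is a sequence $\kappa_n\to\kappa_*$ with $\kappa_*$ non-physical. Let $C$ be a component of the limiting subgraph containing no Dirichlet vertex yet with net inflow $q_C = \sum_{k\in C\cap\F}q_k\neq 0$. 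Every edge of the cut separating $C$ from the rest shrinks to zero, while conservation forces the total flow across this cut to equal $q_C$ for every $n$; by Cauchy--Schwarz $\sum_{e\in\text{cut}}Q_e^2/\kappa_e \geq q_C^2/\sum_{e\in\text{cut}}\kappa_e \to +\infty$, so $D(\kappa_n)\to+\infty$. Lower semicontinuity follows, and the direct method yields a minimizer of $D$, necessarily at a physical configuration.

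The complementary dissipation is handled by the same scheme once its boundary blow-up is controlled, which I expect to be the main obstacle: writing $f = D - 2W$ with $W = \sum_{k\in\Pn}\bar p_k\sum_l Q_{kl}$, the term $W$ could in principle also diverge as the network degenerates, and naive inter-component flow estimates are delicate. The key estimate is that $W$ cannot grow faster than $\sqrt D$: for each Dirichlet vertex $k$, Cauchy--Schwarz gives $|\sum_l Q_{kl}| \leq (\sum_l\kappa_{kl})^{1/2}(\sum_l Q_{kl}^2/\kappa_{kl})^{1/2} \leq C_k\sqrt D$, since $\sum_l\kappa_{kl}$ is bounded on $S$. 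Summing over $\Pn$ gives $|W|\leq C\sqrt D$, so at a non-physical limit $f = D - 2W \geq D - 2C\sqrt D \to +\infty$, while on the physical part $f$ is continuous. Thus $f$ is lower semicontinuous on the compact set $S$ and attains its minimum at a physical network.

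Finally, for the penalty functional $\Theta = D + a\sum_{k>l}\kappa_{kl}^{1/2}d_{kl}$ the feasible set $\{\kappa\geq 0\}$ is unbounded, so compactness is replaced by coercivity: as $D\geq 0$ and the material term tends to $+\infty$ when $|\kappa|\to\infty$, every sublevel set $\{\Theta\leq\Theta_0\}$ is bounded, and I may take $\Theta_0 = \Theta(\kappa^{\mathrm{phys}})$ for any physical $\kappa^{\mathrm{phys}}$. Restricting to the associated compact set and reusing the lower semicontinuity argument (the material term is continuous, and $D\to+\infty$ at non-physical limits as above) produces a global minimizer; consistent with the remark in Section~\ref{sec:intro}, this minimizer may be the empty network when that configuration is itself physical.
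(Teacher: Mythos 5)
Your proposal is correct, and while it follows the same direct-method skeleton as the paper --- compact feasible set, continuity of the objective at physical configurations, blow-up of the dissipation at non-physical limits, and coercivity via the material term in the penalty case --- the key technical step for the complementary dissipation is handled by a genuinely different and shorter argument. The paper needs a \emph{uniform} bound on the pressure work term $W=\sum_{k\in\Pn} p_k\sum_{l\,:\,\langle k,l\rangle=1}Q_{kl}$, and gets it by splitting the Kirchhoff flow into a pressure-driven part and a flow-driven part, proving a discrete maximum principle for the former and a combinatorial pressure-tracing estimate for the latter. You instead prove the sub-critical bound $|W|\le C\sqrt{D}$ by one Cauchy--Schwarz inequality per Dirichlet vertex, using only that $\sum_{l}\kappa_{kl}$ is bounded on the constraint set; since $\sqrt{D}=o(D)$, this still forces $f=D-2W\to+\infty$ at non-physical limits and gives $f\ge -C^2$ on the physical part, which is all the direct method requires. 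Your cut estimate $\sum_{e\in\mathrm{cut}}Q_e^2/\kappa_e\ge q_C^2/\sum_{e\in\mathrm{cut}}\kappa_e$ similarly replaces the paper's pigeonhole bound $D\ge q_{tot}^2/(n^2\epsilon)$, and your $+\infty$-extension with lower semicontinuity is an equivalent packaging of the paper's explicit excision of small neighborhoods of non-physical configurations. What the paper's heavier route buys is the maximum principle and the inflow estimates as reusable by-products (the authors invoke such boundedness results later); what yours buys is brevity and a treatment of all boundary data at once. One point you should tighten (the paper shares this looseness): at boundary configurations that are physical but contain a connected component with no Dirichlet vertex, the pressure system is singular --- pressures are determined only up to an additive constant --- so continuity of the flows there cannot be justified by saying ``the defining linear system has a unique solution''; argue instead componentwise with a reduced invertible system (grounding one vertex in each such component), or via Thomson's variational characterization of the Kirchhoff flow.
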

Observe that the complementary dissipation (\ref{func:tot_energy}) with material penalty might not have a global minimizer. Consider a simple network made up of two pressure vertices with prescribed pressures $p=1, 0$ connected by an edge with conductance $\kappa$. Then the complementary dissipation with material penalty is $-\kappa + a \kappa^\frac{1}{2}d_{12}$, which goes to $-\infty$ as $\kappa \to \infty$. Thus a global minimizer does not exist in this example. Now we define Murray's law:

\begin{definition}

A physical network is said to satisfy Murray's law if there is a constant $a>0$ such that the following relation between Kirchhoff flow $Q_{kl}$ and conductance $\kappa_{kl}$ holds $\forall \langle k,l\rangle =1$:

\begin{equation}
\kappa_{kl} = a \frac{|Q_{kl}|^\frac{4}{3}}{d^\frac{2}{3}_{kl}}.
\label{eq:murray_law}
\end{equation}

\end{definition}

\noindent If flows obey the Hagen-Poiseuille law (so that $\kappa_{kl} \; \propto \; r_{kl}^4$ where $r_{kl}$ is the radius of edge $kl$), then Equation \ref{eq:murray_law}) implies that $|Q_{kl}| \; \propto \; r^3_{kl}$. Our first result reframes Murray's law with respect to global minimizers.

\begin{theorem}\label{thm:murray_law}
A physical network that globally minimizes the complementary dissipation (\ref{func:tot_energy}) under material constraint (\ref{eq:material_constraint_d}) satisfies the Murray's law.
\end{theorem}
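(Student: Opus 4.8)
The plan is to reduce the optimization over networks to a finite-dimensional constrained optimization in the conductances alone, and then apply Lagrange multipliers. The one genuine difficulty is that the Kirchhoff flow $Q_{kl}$ is itself a function of the conductances, so naively $\partial f/\partial\kappa_{kl}$ would involve the implicit derivatives $\partial Q_{mn}/\partial \kappa_{kl}$. The central idea is that these implicit terms cancel by an envelope (Thomson-type) argument, which is exactly what makes Murray's heuristic ``treat flows as independent of radii'' legitimate here.

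First I would establish a variational characterization of the Kirchhoff flow: for fixed conductances $\kappa$, the Kirchhoff flow minimizes the complementary dissipation (\ref{func:tot_energy}), now viewed as a function $f(Q,\kappa)$ of an independent antisymmetric flow field $Q$, over the affine space of admissible flows, namely those satisfying $\sum_{l\,:\,\langle k,l\rangle=1}Q_{kl}=q_k$ at every $k\in\F$ and $\sum_{l}Q_{kl}=0$ at every $k\notin\Pn\cup\F$, with no constraint imposed at Dirichlet vertices. Since $f$ is a strictly convex quadratic in $Q$ plus a linear term and the feasible set is affine, the minimizer is unique. Introducing Lagrange multipliers $\phi_k$ for the conservation constraints at non-Dirichlet vertices and differentiating in $Q_{kl}$, the stationarity condition reads $Q_{kl}=\kappa_{kl}(p_k-p_l)$, where $p_k=\bar p_k$ at Dirichlet vertices and $p_k=\phi_k$ otherwise; the factor of $2$ in the definition of $f$ is precisely what makes the multiplier term and the prescribed-pressure term combine into the single potential difference $p_k-p_l$. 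These are exactly the constitutive law and the defining equations of the Kirchhoff flow, so the unique minimizer of $f(\cdot,\kappa)$ is the Kirchhoff flow and the optimal multipliers are the vertex pressures.

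Next I would invoke the envelope theorem. Writing $g(\kappa)=\min_{Q\ \mathrm{admissible}}f(Q,\kappa)=f(Q^*(\kappa),\kappa)$, the admissible set does not depend on $\kappa$, so $\partial Q^*/\partial\kappa_{kl}$ lies in the tangent space of the affine constraint set; since $Q^*$ is a minimizer over that set, $\partial f/\partial Q$ is orthogonal to this tangent space at $Q^*$. Hence the implicit contributions vanish and the total derivative equals the partial derivative with the flow held fixed,
\[
\frac{dg}{d\kappa_{kl}}=\left.\frac{\partial f}{\partial\kappa_{kl}}\right|_{Q=Q^*}=-\frac{Q_{kl}^2}{\kappa_{kl}^2},
\]
so the network optimization is equivalent to minimizing $g(\kappa)$ subject to the smooth constraint (\ref{eq:material_constraint_d}).

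Finally, at the global minimizer (whose existence is guaranteed by Proposition \ref{prop:existence_global_minimizer}), I would apply the Lagrange multiplier theorem on the edges where $\kappa_{kl}>0$, obtaining a single multiplier $\lambda$ with
\[
-\frac{Q_{kl}^2}{\kappa_{kl}^2}=\lambda\cdot\tfrac{1}{2}\kappa_{kl}^{-1/2}d_{kl}.
\]
Solving for $\kappa_{kl}$ gives $\kappa_{kl}=(-2/\lambda)^{2/3}|Q_{kl}|^{4/3}/d_{kl}^{2/3}$, i.e. Murray's law (\ref{eq:murray_law}) with $a=(-2/\lambda)^{2/3}$, the same constant on every active edge. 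On any edge with $\kappa_{kl}=0$ the vessel is disconnected, so $Q_{kl}=0$ and (\ref{eq:murray_law}) holds trivially; one need only check $\lambda<0$, which follows because the boundary data force some nonzero flow (otherwise the material constraint would be incompatible with all $\kappa_{kl}=0$). The main obstacle is the variational/envelope step: making precise that the Kirchhoff flow is the minimizer of $f$ over admissible flows and that this annihilates the implicit $\kappa$-dependence of the flow. Once that is secured, the remainder is a routine Lagrange computation.
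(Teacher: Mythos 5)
Your proposal is sound in its main line, and it genuinely reorganizes the argument relative to the paper. The paper proves the theorem by a two-step comparison: given a network violating Murray's law, it first reoptimizes the conductances with the flows $\tilde Q_{kl}$ frozen (a Lagrange computation on the compact constraint surface, yielding exactly your stationarity equation; cf.\ Equation (\ref{eq:murraykappa})), and then relaxes the frozen flows to the Kirchhoff flows of the new conductances, invoking the same generalized Thomson principle that you prove in your first step (the paper's Equation (\ref{eq:flow_der}), with the identical identification $\lambda_k = 2p_k$). You package the same two ingredients differently: Thomson's principle defines the value function $g(\kappa)=\min_{Q} f(Q,\kappa)$, the envelope argument kills the implicit derivatives, and KKT on the active edges gives Murray's law as a first-order necessary condition. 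This buys a real simplification: you never construct a comparison network, so you avoid the substantial portion of the paper's proof devoted to showing that assigning $\kappa_{kl}=0$ to zero-flow edges leaves a physical network, and you do not need the paper's global argument that the Murray point is the unique minimizer on the constraint surface. The costs are hypotheses you should state explicitly: differentiability of $Q^*(\kappa)$, hence of $g$, near the minimizer (implicit function theorem applied to the Kirchhoff system, which is invertible for physical networks, physicality being stable under small perturbations of the positive conductances), and constraint qualification for the single constraint (\ref{eq:material_constraint_d}) restricted to active edges (automatic, since its gradient never vanishes there). Note also that the paper's stronger, constructive statement --- that the Murray reallocation strictly decreases $f$ --- is reused in the proofs of Theorems \ref{thm:no_loop} and \ref{thm:no_pressure_path}, so the paper's packaging is not gratuitous.

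The one incorrect step is your justification that $\lambda<0$. The claim that ``the boundary data force some nonzero flow (otherwise the material constraint would be incompatible with all $\kappa_{kl}=0$)'' is a non sequitur: the material constraint restricts conductances, not flows, and the boundary data need not force any flow. Concretely, if every vertex in $\Pn$ carries the same prescribed pressure and every $q_k=0$, the Kirchhoff flow vanishes identically for every feasible $\kappa$, every network satisfying (\ref{eq:material_constraint_d}) is a global minimizer with $f=0$, and your stationarity equation gives $\lambda=0$; Murray's law with $a>0$ then genuinely fails on any edge with $\kappa_{kl}>0$. This is an edge case in the theorem itself rather than a defect of your architecture: the paper's proof quietly handles it by declaring a flow-free network to satisfy Murray's law ``with $a=0$'' (in tension with the definition requiring $a>0$), and the companion Theorems \ref{thm:no_loop} and \ref{thm:no_pressure_path} explicitly except ``the case this network has no flow in it.'' Your proof becomes correct once you make the same exclusion: your stationarity equation forces either $\lambda=0$, in which case every active edge has $Q_{kl}=0$ and the network is flow-free (the excluded case), or $\lambda<0$, in which case every active edge has $Q_{kl}\neq 0$ and Murray's law holds with $a=(-2/\lambda)^{2/3}$; the mixed situation of zero-flow and nonzero-flow active edges is incompatible with a single multiplier, which your setup rules out automatically.
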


\noindent  Our second and third results establish properties previously attributed to minimal dissipative networks \cite{durand2007structure} but now allowing for both Neumann and Dirichlet boundary conditions. Let a path between vertices $k,l$ be a set of vertices $k=k_1,k_2,...,k_n = l$ such that no vertex is listed more than once and $\langle k_i,k_{i+1}\rangle = 1, \kappa_{k_i k_{i+1}} >0 \; \forall 1\leq i \leq n-1$.

\begin{theorem}\label{thm:no_loop}

In a physical network that globally minimizes the complementary dissipation (\ref{func:tot_energy}) under material constraint (\ref{eq:material_constraint_d}) there is exactly one path between any pair of points, except the case this network has no flow in it, i.e. $Q_{kl} = 0 \; \forall \langle k,l\rangle =1$.

\end{theorem}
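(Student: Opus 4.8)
The plan is to recast the constrained optimization as a pure flow optimization and then kill loops with a circulation perturbation. First I would establish a \emph{complementary variational principle}: for fixed conductances $\kappa_{kl}>0$, the Kirchhoff flow minimizes the functional $f(Q)=\sum_{k>l}Q_{kl}^2/\kappa_{kl}-2\sum_{k\in\Pn}\bar p_k\sum_l Q_{kl}$ over the affine space $\mathcal{A}$ of flows obeying mass conservation at every non-Dirichlet vertex (so $\sum_l Q_{kl}=0$ for $k\notin\Pn\cup\F$ and $\sum_l Q_{kl}=q_k$ for $k\in\F$), with $\bar p_k$ the prescribed Dirichlet pressures. This is a strictly convex quadratic on a nonempty affine space, hence has a unique minimizer, and a first-variation computation identifies it with the Kirchhoff flow: for an admissible variation $\delta Q$ (divergence-free off $\Pn$) the cross term $\sum_{k>l}(p_k-p_l)\delta Q_{kl}$ collapses, via $Q_{kl}=-Q_{lk}$, to $\sum_{k\in\Pn}\bar p_k\sum_l\delta Q_{kl}$, which exactly cancels the variation of the boundary term. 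Thus the value of the complementary dissipation equals $\min_{Q\in\mathcal{A}}f(Q,\kappa)$.

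Next I would interchange the minimization over $\kappa$ on the material-constraint surface with the minimization over $Q\in\mathcal{A}$, which is always legitimate since both orders compute the joint infimum. For a fixed flow $Q$, minimizing $\sum_e Q_e^2/\kappa_e$ subject to $\sum_e\kappa_e^{1/2}d_e=K$ is precisely the computation underlying Theorem \ref{thm:murray_law}: the optimizing conductances obey Murray's law and the optimal value is $S(Q)^3/K^2$, where $S(Q)=\sum_e|Q_e|^{2/3}d_e^{2/3}$ and zero-flow edges contribute nothing while receiving zero conductance. Since the boundary term $L(Q)=2\sum_{k\in\Pn}\bar p_k\sum_l Q_{kl}$ is independent of $\kappa$, the whole problem reduces to minimizing $\Phi(Q)=K^{-2}S(Q)^3-L(Q)$ over $Q\in\mathcal{A}$, with the optimal conductances reconstructed from the optimal flow by Murray's law. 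In particular the positive-conductance subgraph coincides with the support $\{e:Q_e\neq 0\}$.

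The loop elimination is the heart of the argument. Suppose the optimal flow $Q^*$ is not identically zero but its support contains a cycle $C$, and let $\delta Q$ be the unit circulation supported on $C$. It is divergence-free at every vertex, so $Q^*+t\delta Q\in\mathcal{A}$ for all $t$ and, crucially, $L(Q^*+t\delta Q)=L(Q^*)$ is constant; hence minimizing $\Phi$ along this line is equivalent to minimizing $t\mapsto S(Q^*+t\delta Q)$, because $x\mapsto x^3$ is increasing and $S>0$. Every edge of $C$ carries nonzero flow at $t=0$, so each summand $|Q^*_e+t\delta Q_e|^{2/3}d_e^{2/3}$ is strictly concave near $t=0$, making $S$ strictly concave there; a strictly concave function cannot attain a local minimum at an interior point, so $t=0$ cannot minimize $S$, contradicting the global optimality of $Q^*$. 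Therefore the support of $Q^*$ is acyclic, giving at most one path between any pair of vertices, and combined with connectivity of the flow-carrying component this yields exactly one path except in the excluded case $Q^*\equiv 0$.

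The step I expect to be the main obstacle is making the reduction rigorous at the boundary of the feasible set, where some conductances vanish: I must confirm that the minimizer guaranteed by Proposition \ref{prop:existence_global_minimizer} really makes $Q^*$ a minimizer of the reduced functional $\Phi$, that $Q_e^2/\kappa_e$ is correctly read as $0$ on zero-flow edges, and that the conductances reconstructed from $Q^*$ via Murray's law reproduce exactly the optimal physical network, so that ``positive conductance'' and ``nonzero flow'' determine the same subgraph. The remaining care is to check that the circulation perturbation stays inside the admissible affine space and alters no Neumann or Dirichlet data, which follows from its being divergence-free at every vertex, including the Dirichlet vertices through which $C$ may pass.
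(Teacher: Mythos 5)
Your proposal is correct, but it takes a genuinely different route from the paper's proof. The paper argues locally on the loop: it freezes the material $K_{\mathrm{loop}}$ carried by the loop, re-optimizes the loop conductances via Murray's law, adds a loop current, and shows by explicit first- and second-derivative computations that the loop dissipation $A^3/K_{\mathrm{loop}}^2$ has only local maxima at interior critical points, so its minimum forces some loop edge to carry zero flow; it must then construct an actual comparison network (relaxing to the Kirchhoff flow) and spend substantial effort verifying that this comparison network remains physical, and it needs a separate case---a loop of positive conductances carrying identically zero flow---which it dispatches with a generalized Rayleigh's principle proved as a lemma. Your global reduction removes essentially all of that bookkeeping: interchanging the two minimizations collapses the problem to minimizing $\Phi(Q)=K^{-2}S(Q)^3-L(Q)$ over the affine space $\mathcal{A}$, so the contradiction lives entirely in the reduced flow problem (the joint infimum is at most $\Phi(Q^*+t\,\delta Q)<\Phi(Q^*)$, which equals the joint minimum), and no comparison network is ever built or checked for physicality. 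Your strict-concavity observation replaces the paper's derivative calculus and is arguably cleaner, since the paper's expressions $Q_{kl}^{-1/3}$ are sign-sloppy for negative flows while $|Q^*_e+t\epsilon_e|^{2/3}$ is unambiguous; and invoking Theorem \ref{thm:murray_law} to identify the positive-conductance subgraph with the support of $Q^*$ is precisely what lets you skip the paper's zero-flow-loop case, since such a loop cannot exist in a minimizer obeying Murray's law with $a>0$. What the paper's local approach buys in exchange is reusable machinery: its Rayleigh's principle lemma and its physicality-preservation arguments are invoked again in Theorem \ref{thm:no_pressure_path} and Proposition \ref{prop:diss_no_pressure_path}, and it never leans on the min-min interchange. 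One caveat applies to both arguments equally: what is actually established is acyclicity of the flow-carrying subgraph (at most one path between any pair of vertices); the ``exactly one path'' phrasing rests on connectivity considerations that neither you nor the paper treats in detail.
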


\begin{theorem}\label{thm:no_pressure_path}

A physical network that globally minimizes the complementary dissipation (\ref{func:tot_energy}) under material constraint (\ref{eq:material_constraint_d}) has no path connecting 2 pressure vertices with the same prescribed pressure, except the case that the network has no flow in it.

\end{theorem}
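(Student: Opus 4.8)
The plan is to argue by contradiction and reduce to Theorem~\ref{thm:no_loop} by folding the offending path into a loop. Let $G$ be the optimal network, assume it carries some flow, and suppose it contains a path $P:\,u=k_1,\dots,k_n=v$ with $\bar{p}_u=\bar{p}_v=\bar p$. First I would dispose of the length-one case: if $u,v$ were joined by a single edge then $Q_{uv}=(\bar p-\bar p)\kappa_{uv}=0$, and Murray's law (Theorem~\ref{thm:murray_law}) would force $\kappa_{uv}=0$, contradicting the positivity of $\kappa_{uv}$ required for $P$; hence $n\ge 3$. The central device is to form an auxiliary network $\tilde G$ by identifying $u$ and $v$ into a single Dirichlet vertex $\tilde u$ held at pressure $\bar p$, leaving every other vertex, edge, conductance and boundary datum untouched.

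Next I would establish that $\tilde G$ has exactly the same Kirchhoff flow, material and complementary dissipation as $G$. Because $p_u=p_v=\bar p$, the pressure field of $G$ already solves the Kirchhoff system of $\tilde G$ --- each neighbour of $u$ or $v$ sees the same boundary pressure $\bar p$, and $\tilde u$, being Dirichlet, imposes no conservation equation --- so by uniqueness the two networks share the same pressures and hence the same edge flows, while the material $\sum\kappa^{1/2}d$ is visibly unchanged. Summing $Q_{kl}=(p_k-p_l)\kappa_{kl}$ by parts rewrites (\ref{func:tot_energy}) as
\[
f=\sum_{k\in\F}p_k q_k-\sum_{k\in\Pn}\bar{p}_k\,s_k,\qquad s_k:=\sum_{l\,:\,\langle k,l\rangle=1}Q_{kl},
\]
and the merge merely replaces $\bar p\,s_u+\bar p\,s_v$ by $\bar p\,(s_u+s_v)=\bar p\,s_{\tilde u}$, so $f(\tilde G)=f(G)$.

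I would then transfer minimality. If some competitor $\tilde G'$ on the topology of $\tilde G$ achieved a smaller $f$ at the same material $K$, splitting $\tilde u$ back into $u,v$ (both pinned at $\bar p$) yields a physical network $G'$ on the topology of $G$ --- each component produced by the split still contains one of the Dirichlet vertices $u,v$ --- with the same material and, by the identity above, $f(G')=f(\tilde G')<f(\tilde G)=f(G)$, contradicting optimality of $G$. Hence $\tilde G$ is itself a global minimizer under (\ref{eq:material_constraint_d}). But $P$ closes into a cycle through $\tilde u$, so $\tilde G$ possesses two distinct paths between some pair of vertices; Theorem~\ref{thm:no_loop} then forces $\tilde G$ to carry no flow, and since its flows coincide with those of $G$, neither does $G$, contradicting the standing assumption and proving the claim.

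The main obstacle is the graph-level bookkeeping around the merge rather than any analytic estimate. I must verify carefully that the split network $G'$ remains physical, i.e.\ that no connected component is left without a pressure vertex, and I must treat the degenerate configuration $n=3$ in which $u$ and $v$ share a common neighbour, where identification produces a pair of parallel edges instead of a simple cycle. That case I expect to settle either by subdividing one path edge before merging, or directly, by noting that two parallel flow-carrying edges can be fused into a single edge carrying the same flow with strictly less material, which already contradicts optimality.
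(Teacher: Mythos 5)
Your route is genuinely different from the paper's. The paper proves this theorem directly, with the same machinery it built for Theorem~\ref{thm:no_loop}: it redistributes the path's material according to Murray's law, adds a path current $Q$, and observes that because $p_{k_1}=p_{k_n}$ the pressure-work contribution $-2(p_{k_1}-p_{k_n})Q$ vanishes, so the complementary dissipation restricted to the path reduces to the path dissipation, which is then minimized exactly as the loop dissipation was (cusp minima at $Q_{kl}=0$, second-derivative test, Rayleigh's principle for the constant-flow case, and a physicality check after cutting). You instead invoke Theorem~\ref{thm:no_loop} as a black box after identifying the two equal-pressure Dirichlet vertices. The ingredients of your reduction are correct: the summation-by-parts identity $f=\sum_{k\in\F}p_kq_k-\sum_{k\in\Pn}\bar p_k s_k$ does hold for Kirchhoff flows, the pressure field of $G$ does solve the Kirchhoff system of $\tilde G$ (and conversely after splitting, with the physicality of the split network checked as you indicate), so the two-way transfer of minimality is sound. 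What your approach buys is economy: the one-parameter current-minimization analysis is done once, in the loop theorem, and this theorem becomes purely combinatorial.

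The gap is in the graph bookkeeping you flag, and it is wider than you acknowledge. Identification of $u$ and $v$ produces parallel edges whenever $u$ and $v$ have a common neighbour in the \emph{prior topology}, whether or not that neighbour lies on the path $P$, so this is not a degeneracy confined to $n=3$; it also produces a self-loop at $\tilde u$ whenever $\langle u,v\rangle=1$ (even if $\kappa_{uv}=0$ at the optimum, competitors on $\tilde G$'s topology may charge that edge, so it cannot simply be ignored in the transfer argument). The paper's formalism admits only simple graphs, and Theorem~\ref{thm:no_loop} is stated and proved only there, so as written you cannot apply it to $\tilde G$. Your two sketched repairs do generalize, but must be executed: (i) subdividing at the midpoint \emph{every} edge incident to $v$ before merging eliminates all multi-edges and self-loops, but you must verify the subdivided network is still a global minimizer on its topology --- this works because a midpoint split with doubled half-conductances preserves flows, dissipation and material exactly, while the reverse series reduction preserves flows and dissipation and never increases material (an inequality of the form $2^{3/2}xy/\sqrt{x^2+y^2}\le x+y$); (ii) alternatively, fusing a parallel pair with $\kappa_{uw},\kappa_{vw}>0$ preserves the Kirchhoff flow and $f$ (since $p_u=p_v$) and strictly saves material, but note the fused pair need not carry flow (e.g.\ if $p_w=\bar p$), so the contradiction with optimality of $G$ is obtained only after mapping back to $G$'s topology and spending the saved material on some flow-carrying edge via your Rayleigh lemma. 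With either repair carried through in full, your argument closes; without it, the invocation of Theorem~\ref{thm:no_loop} on $\tilde G$ is not justified.
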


\noindent From these results we can rederive properties of minimal dissipative networks for boundary conditions considered by Durand \cite{durand2007structure}.

\begin{corollary} \label{cor:energy_diss_equi}

A physical network that globally minimizes dissipation (\ref{func:diss}) under the material constraint (\ref{eq:material_constraint_d}) satisfies Murray's law, has no loops, and has no paths connecting two pressure vertices if all the pressure vertices have the same specified pressure
\end{corollary}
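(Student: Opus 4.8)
The plan is to show that, under the hypothesis that all pressure vertices share a common prescribed pressure, the complementary dissipation $f$ and the dissipation $D$ differ only by a constant that does not depend on the conductances, so that the two functions have exactly the same minimizers over the admissible set fixed by the material constraint (\ref{eq:material_constraint_d}). The conclusions of Theorems \ref{thm:murray_law}, \ref{thm:no_loop} and \ref{thm:no_pressure_path} then transfer verbatim.

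First I would write, directly from the definitions (\ref{func:diss}) and (\ref{func:tot_energy}),
\begin{equation}
f = D - 2\sum_{k\in\Pn} p_k \sum_{l\,:\,\langle k,l\rangle=1} Q_{kl}.
\end{equation}
Let $\bar p$ be the common value of $p_k$ for $k\in\Pn$. Then the subtracted term factors as $2\bar p \sum_{k\in\Pn}\sum_{l\,:\,\langle k,l\rangle=1} Q_{kl}$, and the remaining step is to argue that $\sum_{k\in\Pn}\sum_{l\,:\,\langle k,l\rangle=1} Q_{kl}$ is the same for every admissible Kirchhoff flow. This is a consequence of global conservation of mass: summing $\sum_{l\,:\,\langle k,l\rangle=1} Q_{kl}$ over all vertices $k$, each edge contributes $Q_{kl}+Q_{lk}=0$, so the total vanishes; splitting the sum according to whether $k\in\Pn$, $k\in\F$, or $k\notin\Pn\cup\F$ and using the Kirchhoff conditions gives
\begin{equation}
\sum_{k\in\Pn}\sum_{l\,:\,\langle k,l\rangle=1} Q_{kl} = -\sum_{k\in\F} q_k.
\end{equation}
Because the inflows $q_k$ are prescribed boundary data, the right-hand side is a fixed constant independent of the conductances $\kappa_{kl}$. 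Hence $f = D + C$ with $C = 2\bar p \sum_{k\in\F} q_k$ constant over the whole admissible set.

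With this identity in hand the argument is immediate: the admissible set (physical networks whose conductances satisfy (\ref{eq:material_constraint_d})) is the same for both problems, and on it $f$ and $D$ differ by the additive constant $C$, so a global minimizer of $D$ is exactly a global minimizer of $f$. Applying Theorem \ref{thm:murray_law} yields Murray's law; applying Theorem \ref{thm:no_loop} yields a unique path between any pair of vertices, i.e. no loops; and applying Theorem \ref{thm:no_pressure_path} yields that no path connects two pressure vertices with the same prescribed pressure, which under the present hypothesis means no path connects any two pressure vertices. The zero-flow degenerate case excepted in Theorems \ref{thm:no_loop} and \ref{thm:no_pressure_path} carries over unchanged.

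The step I expect to require the most care is the bookkeeping that establishes $\sum_{k\in\Pn}\sum_{l\,:\,\langle k,l\rangle=1} Q_{kl} = -\sum_{k\in\F} q_k$ and, crucially, that this quantity is invariant across all conductance choices rather than merely constant for a single network; the antisymmetry $Q_{kl}=-Q_{lk}$ together with the Kirchhoff balance at Neumann and interior vertices is what makes the pressure-work term a genuine constant and not a function of the optimization variables. Everything else is a direct appeal to the three preceding theorems.
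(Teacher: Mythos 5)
Your proof is correct, and it reaches the corollary by a genuinely different (though closely related) mechanism than the paper's. The paper's own proof is a two-line gauge argument: since Kirchhoff flows are unchanged when all prescribed pressures are shifted by a common additive constant, one may without loss of generality set the common pressure to zero, whereupon the pressure-work term vanishes identically, $f = D$ exactly, and Theorems \ref{thm:murray_law}, \ref{thm:no_loop}, \ref{thm:no_pressure_path} apply verbatim. You instead keep the pressures as given and show, via antisymmetry of $Q_{kl}$ together with the Kirchhoff balance at Neumann and interior vertices, that the pressure-work term equals the fixed quantity $2\bar p \sum_{k\in\F} q_k$, so that $f = D + C$ with $C$ independent of the conductances, and the two minimization problems over the same admissible set have identical minimizers. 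The two arguments encode the same underlying fact---your constant $C$ is precisely the amount by which $f$ shifts under the paper's gauge transformation---but they trade off differently: your version is more explicit (it identifies the constant, and it needs only mass conservation, not the invariance of the Kirchhoff problem under uniform pressure shifts, which the paper invokes and justifies via its appendix), while the paper's version is shorter and makes $f$ literally coincide with $D$. One hypothesis you use implicitly and should state: the identity $\sum_{k\in\Pn}\sum_{l\,:\,\langle k,l\rangle=1} Q_{kl} = -\sum_{k\in\F} q_k$ requires the Kirchhoff balance to hold at \emph{every} vertex outside $\Pn$, including in configurations where zero conductances disconnect the network; this is guaranteed because the admissible set consists of physical networks, so the argument is sound as written.
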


\begin{proof}

It suffices to show that the complementary dissipation (\ref{func:tot_energy}) reduces to dissipation (\ref{func:diss}). Since Kirchhoff flow remains the same up to an additive constant on all pressures we can without loss of generality let $p_0 = 0$. Then $f = D$ and the results carry through.
\end{proof} 

\noindent While it is possible that two pressure vertices with different prescribed pressures connect in networks with minimal complementary dissipation, it does not happen for minimal dissipative networks that have at least one vertex with flow boundary condition.

\begin{proposition} \label{prop:diss_no_pressure_path}

In a physical network that globally minimizes the dissipation (\ref{func:diss}) under material constraint (\ref{eq:material_constraint_d}) with $\F \neq \phi$ no pair of pressure vertices are connected by a path.
\end{proposition}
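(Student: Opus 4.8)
The plan is to argue by contradiction. Suppose the dissipation minimizer $N^*$ has $\F\neq\phi$ yet contains a path joining two pressure vertices $P_1,P_2\in\Pn$, and construct a competitor obeying the same material budget (\ref{eq:material_constraint_d}) but with strictly smaller dissipation (\ref{func:diss}). Two reductions set the stage. First, dissipation depends only on pressure differences, so adding a common constant to every $\bar p_k$, $k\in\Pn$, leaves all flows $Q_{kl}$ and hence $D$ unchanged; thus $N^*$ stays optimal under any such gauge shift, which I will use to normalize pressures when convenient. Second, by the loop-removal mechanism underlying Theorem~\ref{thm:no_loop} (a loop can always be thinned to lower $D$ at fixed material), $N^*$ may be taken to be a forest on its positive-conductance edges, so that the path between $P_1$ and $P_2$ is unique and lies inside a single tree $T$.

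The engine of the argument is the first variation of $D$ with respect to a single active conductance $\kappa_e$, $e=\langle k,l\rangle$. Differentiating $D=\sum_e\kappa_e(p_k-p_l)^2$ and using that the net outflow $\sum_{l}Q_{kl}$ vanishes at no-boundary vertices, is fixed at Neumann vertices, and that $\partial p_k/\partial\kappa_e=0$ at Dirichlet vertices, the implicit response of the interior pressures collapses to a single boundary term, giving
\[
\frac{dD}{d\kappa_e}=(p_k-p_l)^2+2\sum_{j\in\F}q_j\,\frac{\partial p_j}{\partial\kappa_e}.
\]
When $\F=\phi$ this derivative is $(p_k-p_l)^2\ge 0$, which is exactly why purely Dirichlet networks (as in the square example of Section~\ref{sec:intro}) happily connect pressure vertices: there is no term that can reward thinning a short circuit. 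The hypothesis $\F\neq\phi$ supplies the second, Neumann-coupled term, and the plan is to use it to show that the Karush--Kuhn--Tucker condition $dD/d\kappa_e=\tfrac{\lambda}{2}\kappa_e^{-1/2}d_e$, which must hold simultaneously on every active edge of $T$ for a single multiplier $\lambda$, cannot persist along a path connecting $P_1$ and $P_2$. Concretely, I would thin the edges of that path toward decoupling while shifting the liberated material onto edges that carry flow driven by the Neumann data (for which $dD/d\kappa_{e'}<0$, so thickening lowers dissipation) or onto a flow-free pendant edge at a vertex of $\F$ (for which the first-order change is zero); comparing the strictly negative variation from thinning against the non-positive variation from reallocation yields a net decrease of $D$ at fixed material, contradicting minimality.

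The main obstacle is precisely the Neumann-coupled term $2\sum_{j\in\F}q_j\,\partial p_j/\partial\kappa_e$: it encodes the global readjustment of the Kirchhoff solution when a conductance is perturbed, so the sign of $dD/d\kappa_e$ is genuinely indeterminate and must be pinned down edge by edge along the path. I would control it through the envelope identity above together with the sign structure forced by the KKT relations, treating separately the transparent case $\bar p_{P_1}=\bar p_{P_2}$, where the short circuit carries no pressure-driven through-flow and reallocation to the Neumann side is unobstructed, and the case $\bar p_{P_1}\neq\bar p_{P_2}$, where the through-flow is a pure short circuit between reservoirs that serves none of the Neumann demand and can be rerouted precisely because $\F\neq\phi$. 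Finally, since the resulting forest can then contain no path between any two pressure vertices, every tree holds at most one element of $\Pn$, which is the assertion; the no-flow degeneracy is excluded here because a flow-free pendant edge at a vertex of $\F$ already realizes $D=0$ with $P_1$ and $P_2$ disconnected, so no minimizer can retain the path.
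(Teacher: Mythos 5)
Your envelope identity for $dD/d\kappa_e$ is correct and is a genuinely nice way to see why $\F\neq\phi$ matters, but the proposal has gaps that the identity alone cannot close. First, the reduction to a forest is circular: the loop-removal mechanism of Theorem \ref{thm:no_loop} lowers the \emph{complementary} dissipation, not the dissipation --- after adding a loop current the flows must be relaxed back to Kirchhoff flows, and that relaxation (the generalized Thomson principle of Section \ref{sec:proof_murray}) is only guaranteed to decrease $f$, not $D$. Indeed the square network of Section \ref{sec:intro} is a dissipation minimizer that \emph{is} a loop; the no-loop property for dissipation minimizers with $\F\neq\phi$ is Corollary \ref{cor:diss_result}, which the paper derives \emph{from} the very proposition you are proving. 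Second, and more fundamentally, your central step --- thin the path edges (``strictly negative variation'') while reallocating the material elsewhere (``non-positive variation'') --- cannot succeed as a first-order argument: if the minimizer satisfies the KKT conditions, then by definition every first-order material reallocation between active edges has vanishing net effect, so a strict first-order decrease is impossible; to reach a contradiction you must prove that KKT \emph{fails} along the path, and that is precisely the sign analysis of the global response term $2\sum_{j\in\F}q_j\,\partial p_j/\partial\kappa_e$ which you acknowledge is ``genuinely indeterminate'' and never carry out. Your case split also rests on a false claim: a path joining two reservoirs with equal prescribed pressure can carry nonzero flow (a Neumann source attached to an interior vertex of the path splits its discharge toward both reservoirs), so the ``transparent case'' is not flow-free and reallocation there is not unobstructed.

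The paper's proof avoids both problems with a global, not infinitesimal, deformation: on each offending path it superposes a finite path current $Q$ and re-optimizes the path conductances by Murray's law at fixed path material, then shows (via the cusp and second-derivative analysis of Section \ref{sec:proof_no_loop}) that the path-restricted dissipation attains its global minimum in $Q$ only where some edge flow vanishes, i.e.\ at a cut; this strictly decreases $D$ itself, with flows kept mass-conserving but deliberately \emph{not} re-equilibrated. Only after every pressure vertex has been disconnected does the paper relax to Kirchhoff flows, because on the disconnected network $f=D+C$ with $C$ determined by the boundary data alone, so the $f$-decreasing relaxation finally decreases $D$ as well. The hypothesis $\F\neq\phi$ enters where a path carries constant flow: it guarantees an edge with nonzero flow off the path onto which the freed material can be moved (Rayleigh's principle) and rules out the path exhausting the whole network. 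Any repaired version of your argument needs substitutes for these two ingredients: a non-infinitesimal mechanism that strictly decreases $D$, and a device handling the fact that Kirchhoff re-equilibration is monotone only for $f$, not for $D$.
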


\noindent This along with Corollary \ref{cor:energy_diss_equi} establishes a general result on minimally dissipative networks

\begin{corollary} \label{cor:diss_result}

A physical network that globally minimizes the dissipation (\ref{func:diss}) under material constraint (\ref{eq:material_constraint_d}) with $\F \neq \phi$ satisfies Murray's law (\ref{eq:murray_law}), has no loops in the sense of Theorem \ref{thm:no_loop}, and has no paths connecting two pressure vertices in the sense of Proposition \ref{prop:diss_no_pressure_path}.

\end{corollary}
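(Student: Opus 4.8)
The plan is to reduce the statement to a component-by-component application of Corollary \ref{cor:energy_diss_equi}, invoking the hypothesis $\F \neq \phi$ only through Proposition \ref{prop:diss_no_pressure_path}. The third assertion, that no pair of pressure vertices is joined by a path, is literally Proposition \ref{prop:diss_no_pressure_path} and needs nothing more. So the real work is to recover Murray's law (\ref{eq:murray_law}) and loop-freeness (in the sense of Theorem \ref{thm:no_loop}) for a global minimizer $N^\ast$ of the dissipation (\ref{func:diss}) under the material constraint (\ref{eq:material_constraint_d}). The first step I would take is to record the structural consequence of Proposition \ref{prop:diss_no_pressure_path}: since no path through edges of positive conductance connects two pressure vertices, any two distinct elements of $\Pn$ lie in distinct connected components of $N^\ast$. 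Hence every connected component $C$ of $N^\ast$ contains at most one vertex of $\Pn$, so within any single component the prescribed pressures are trivially all equal, and the hypothesis of Corollary \ref{cor:energy_diss_equi} is satisfied on each component separately.

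Next I would establish the decomposition principle that localizes the problem to components. Both the dissipation (\ref{func:diss}) and the material (\ref{eq:material_constraint_d}) are sums over edges, hence additive over the connected components $C_1,\dots,C_m$ of $N^\ast$; writing $K_i$ for the material $N^\ast$ spends inside $C_i$, we have $\sum_i K_i = K$. The claim is that the restriction of $N^\ast$ to $C_i$ is itself a global minimizer of dissipation on the induced subproblem under budget $K_i$. The argument is a standard exchange: any admissible network on the vertices of $C_i$, built from prior edges internal to $C_i$ and spending material $K_i$, could be substituted for $C_i$ inside $N^\ast$ without disturbing the other components, producing a physical network with total material $K$ but strictly smaller total dissipation, contradicting global optimality of $N^\ast$. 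Because a competitor uses only edges among the vertices of $C_i$, it cannot reconnect $C_i$ to any other component (the cross-edges keep zero conductance), so the global Kirchhoff problem decouples and physicality is preserved; each $C_i$ carries at most one Dirichlet value together with Neumann inflows that sum to zero whenever the component contains no pressure vertex, guaranteeing a unique Kirchhoff solution after substitution.

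Finally I would apply Corollary \ref{cor:energy_diss_equi} on each $C_i$. Its pressure data being all equal, shifting them to $0$ gives $f = D$ on $C_i$ exactly as in the proof of that corollary, so the minimizer restricted to $C_i$ inherits Murray's law on each of its edges and contains no loops in the sense of Theorem \ref{thm:no_loop}. Since Murray's law (\ref{eq:murray_law}) is a pointwise relation between $Q_{kl}$ and $\kappa_{kl}$ on each edge, and loop-freeness of every component is precisely loop-freeness of $N^\ast$, both properties pass from the components up to $N^\ast$ itself. Combined with Proposition \ref{prop:diss_no_pressure_path}, this delivers all three asserted properties.

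I expect the main obstacle to be the decomposition step rather than the invocation of the quoted results. One must argue carefully that restricting a global minimizer to a single connected component again solves the localized optimization, and in particular that the localized problem stays physical under any competitor substitution; the delicate point is that the partition into components is itself produced by $N^\ast$, so one must confine competitors to the prior subgraph induced on the vertices of $C_i$ to prevent an implicit reconnection that would corrupt the material accounting or the Kirchhoff solvability. Once this is in place, every remaining step is bookkeeping on the results already proved.
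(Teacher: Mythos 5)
Your proposal follows essentially the same route as the paper: invoke Proposition \ref{prop:diss_no_pressure_path} to split the minimizer into connected components each containing at most one pressure vertex, then apply Corollary \ref{cor:energy_diss_equi} componentwise to obtain Murray's law and loop-freeness. The only difference is that you spell out the exchange argument showing each component is a global minimizer of its localized problem under its own material budget, a step the paper's proof takes for granted; this is a welcome filling-in of detail rather than a different approach.
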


\begin{proof}

Suppose we have a physical network that globally minimizes dissipation (\ref{func:diss}) with $|\Pn| = n$ and the connected components (where two vertices can be connected only by links with positive conductance) of the network are labeled $G_1,G_2,...,G_m$. From Proposition \ref{prop:diss_no_pressure_path} we know that two pressure vertices cannot connect, so $m\geq n$ and each subgraph includes at most one pressure vertex, i.e. $|G_i\cap \Pn| \leq 1 \; \forall 1\leq i \leq m$. Now we look at a specific subnetwork $G_i$. The subnetwork satisfies the assumptions of Corollary \ref{cor:energy_diss_equi}, so it has to satisfy Murray's law (\ref{eq:murray_law}) and also contain no loops; or else there is no flow in $G_i$. Since this argument holds for all subnetworks the whole network satisfies Murray's law and contains no loops.
\end{proof}

Throughout this work we follow recent work \cite{bohn2007structure,katifori2010damage} by imposing material cost as a constraint rather than following Murray's approach of imposing it as a penalty function. Here we discuss the conditions under which these different formulations are equivalent for minimally dissipative networks.

\begin{proposition}\label{prop:material_formulation_scale_invar}
Suppose the flows in each minimally dissipative network under material constraint (\ref{eq:material_constraint_d}) are invariant when conductances are uniformly rescaled, i.e. the network with $\kappa'_{kl} = \beta \kappa_{kl}, \beta >0$ has the same flows as that in the original network. Then there is a bijection $K(a)$ from $(0,\infty)$ to $(0,\infty)$ such that every minimally dissipative network with material constraint $K$ is a minimally dissipative network with material penalty under some coefficient $a$ (\ref{eq:constraint_material_penalty}) and vice versa.
\end{proposition}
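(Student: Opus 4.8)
The plan is to route both optimization problems through a single object: the optimal-dissipation profile $D_{\min}(K) := \min\{D(\kappa) : M(\kappa) = K\}$, where I abbreviate the material functional as $M(\kappa) = \sum_{k>l,\langle k,l\rangle=1} \kappa_{kl}^{1/2} d_{kl}$, reserving the symbol $K$ for a budget value. By Proposition \ref{prop:existence_global_minimizer} this minimum is attained for every $K>0$, so $D_{\min}$ is well defined on $(0,\infty)$. The heart of the argument is the claim that $D_{\min}(K) = C/K^2$ for a single constant $C = D_{\min}(1) > 0$; once this is in hand, the penalty problem collapses to a one-variable calculus problem and the bijection $K(a)$ falls out.

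To establish the scaling law I would use the hypothesis directly. First I note that if the flows are invariant under $\kappa \mapsto \beta\kappa$, then the dissipation rescales cleanly, $D(\beta\kappa) = \sum_{k>l} Q_{kl}^2/(\beta\kappa_{kl}) = D(\kappa)/\beta$, while the material rescales as $M(\beta\kappa) = \beta^{1/2} M(\kappa)$. Now take a minimizer $\kappa^*$ at budget $K$ and rescale it by $\beta = (K'/K)^2$ to land on budget $K'$; since $\kappa^*$ is minimally dissipative the hypothesis applies to it, so the rescaled network is feasible for budget $K'$ with dissipation $D_{\min}(K)(K/K')^2$, giving $D_{\min}(K') \le D_{\min}(K)(K/K')^2$. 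Running the same rescaling starting from a minimizer at $K'$ yields the reverse inequality, and the two combine to $D_{\min}(K)K^2 = D_{\min}(K')K'^2$ for all $K,K'$, which is exactly $D_{\min}(K) = C/K^2$.

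With the profile in hand I would invoke the elementary identity $\Theta_{\min}(a) = \min_{K>0}[D_{\min}(K) + aK]$, which holds because minimizing $D + aM$ over all conductances can be carried out by first fixing the material level and then optimizing over it. Substituting $D_{\min}(K) = C/K^2$ reduces the problem to minimizing $C/K^2 + aK$, whose unique critical point is $K(a) = (2C/a)^{1/3}$; since this is a strictly decreasing surjection of $(0,\infty)$ onto itself, it is the desired bijection. Finally I would match the minimizer sets: for any $\kappa$ one has $\Theta(\kappa) = D(\kappa) + aM(\kappa) \ge D_{\min}(M(\kappa)) + aM(\kappa) \ge \Theta_{\min}(a)$, with equality precisely when $\kappa$ is minimally dissipative at its own budget and that budget equals $K(a)$. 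Hence the penalty minimizers at $a$ are exactly the constraint minimizers at $K(a)$, and conversely, which is the statement.

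The main obstacle is the scaling lemma: everything downstream is routine calculus plus a two-line envelope argument, but the identity $D_{\min}(K)=C/K^2$ is where the hypothesis does all its work. The delicate point is that I may only invoke scale invariance for networks that are themselves minimally dissipative, so I must apply it solely to the attained minimizers $\kappa^*$ (guaranteed by Proposition \ref{prop:existence_global_minimizer}) at each budget, never to an arbitrary competitor configuration. A secondary check is that the inner minimum over $K$ in the envelope identity is genuinely attained, which follows because $C/K^2 + aK$ blows up at both ends of $(0,\infty)$, so that the penalty optimum is realized at a finite, positive material level.
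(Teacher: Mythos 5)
Your proof is correct and takes essentially the same route as the paper's: the same key scaling lemma (applying the scale-invariance hypothesis only to attained minimizers to get $D_{\min}(K)=C/K^{2}$), the same observation that a penalty minimizer must be a constraint minimizer at its own material level, and the same one-variable calculus reduction yielding the identical bijection $K(a)=(2C/a)^{1/3}$. Your value-function/envelope packaging is merely a cleaner formalization of the paper's argument, which works directly with a unit-material minimizer $\hat{\kappa}$ and its rescalings $\beta\hat{\kappa}$, $\beta=K^{2}$.
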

For networks with at least one flow boundary condition we know from Cor.~\ref{cor:diss_result} that all the pressure vertices disconnect and hence the flows in minimally dissipative networks under material constraint are invariant when conductances are uniformly rescaled. Thus

\begin{corollary}\label{cor:diss_flowBC_material_equi}
If the network has at least one flow vertex, i.e. $\F\neq \phi$, then the minimal dissipation problem under material constraint (\ref{eq:material_constraint_d}) and material penalty (\ref{eq:constraint_material_penalty}) are equivalent in the sense of Proposition \ref{prop:material_formulation_scale_invar}.
\end{corollary}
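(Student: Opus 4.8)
The plan is to deduce the corollary directly from Proposition~\ref{prop:material_formulation_scale_invar}, whose only hypothesis is that the flows in a minimally dissipative network under material constraint are unchanged when all conductances are uniformly rescaled. The entire task therefore reduces to verifying this rescaling invariance under the standing assumption $\F \neq \phi$; once it is established, Proposition~\ref{prop:material_formulation_scale_invar} supplies the bijection $K(a)$ and the asserted equivalence with no further work.

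First I would invoke the structure of the minimizer. Let $G$ be any physical network that globally minimizes dissipation (\ref{func:diss}) under the material constraint (\ref{eq:material_constraint_d}) with $\F \neq \phi$. By Corollary~\ref{cor:diss_result} the network $G$ contains no loops and has no path joining two pressure vertices. Consequently $G$ is a forest on its positive-conductance edges, and since any two vertices in the same connected component are joined by a positive-conductance path, each connected component contains at most one pressure vertex.

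The heart of the argument is to show that on such a forest the edge flows are fixed by the topology and the Neumann data $\{q_k\}$ alone, independently of the conductances. For a fixed edge $\langle k,l\rangle$, deleting it splits its component into two subtrees $A \ni k$ and $B \ni l$. Summing the conservation laws over all vertices of $A$, every internal edge flow cancels and one is left with $Q_{kl}$ balancing the total external inflow into $A$. Because the component holds at most one pressure vertex, at least one of $A, B$ contains no pressure vertex; on that side the only external sources are the prescribed Neumann inflows, so $Q_{kl}$ equals the corresponding conductance-free sum $\sum_{j \in A \cap \F} q_j$. Hence every $Q_{kl}$ is a sum of Neumann inflows and does not involve any $\kappa_{kl}$. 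In particular the uniform rescaling $\kappa'_{kl} = \beta \kappa_{kl}$ leaves all flows invariant, which is precisely the hypothesis of Proposition~\ref{prop:material_formulation_scale_invar}; applying that proposition completes the proof.

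I expect the main obstacle to be the bookkeeping in the last step, specifically the connected component that \emph{does} contain a pressure vertex: the pressure vertex's net throughput is not prescribed a priori, and one must argue that global mass conservation within the (forest) component pins it down, so that the flows remain conductance-independent even across edges incident to the pressure vertex. I would also confirm that the trivial no-flow networks excepted in Theorems~\ref{thm:no_loop} and~\ref{thm:no_pressure_path} cause no trouble, since in that case all $Q_{kl} = 0$ and invariance under rescaling holds trivially.
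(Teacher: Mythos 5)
Your proposal is correct, and it shares the paper's skeleton: use Corollary \ref{cor:diss_result} to verify the rescaling-invariance hypothesis of Proposition \ref{prop:material_formulation_scale_invar}, then apply that proposition. Where you genuinely diverge is in \emph{how} the hypothesis is verified. The paper's entire justification is one sentence resting on Proposition \ref{prop:diss_no_pressure_path} alone: since each connected component of the minimizer contains at most one pressure vertex, the rescaled pressures $p'_k = \bar{p} + (p_k - \bar{p})/\beta$ (with $\bar{p}$ the component's unique prescribed pressure, or any constant if there is none) solve Kirchhoff's laws for the conductances $\beta\kappa_{kl}$, satisfying the single Dirichlet condition and all Neumann conditions; by uniqueness of the Kirchhoff flow, $Q'_{kl} = (p'_k - p'_l)\beta\kappa_{kl} = Q_{kl}$. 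You instead invoke the no-loop property as well, cut an edge of the forest, and sum conservation laws over the pressure-free side to conclude that every flow equals a conductance-free sum of Neumann inflows. Your route proves something strictly stronger --- the flows are independent of the conductances altogether, not merely invariant under uniform rescaling --- but it consumes more of the minimizer's structure and forces the separate treatment of the no-flow networks excepted in Theorem \ref{thm:no_loop}, which you correctly supply (note the exception is really per connected component: a component with loops but zero flow is also trivially invariant, and a component carrying flow is a tree, so your edge-cutting argument applies componentwise). The paper's pressure-rescaling mechanism needs no such case split, since Proposition \ref{prop:diss_no_pressure_path} carries no exception. Both verifications are sound; yours is more informative about the minimizer, the paper's is leaner.
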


\section{Proof of Proposition \ref{prop:existence_global_minimizer}}\label{sec:existence_proof}

\begin{proof}
To begin consider the dissipation function (\ref{func:diss}) under material constraint (\ref{eq:material_constraint_d}). Suppose there are $E$ edges then the intersection of $\{\kappa_i \geq 0\}$ and the material constraint surfaces forms a compact set $A$ in $\R^E$. For each physical network the flow is obtained by inverting an invertible matrix with components continuously dependent on the conductances so the dissipation is continuous in the conductances. Dissipation is finite at each physical network since $\kappa_{kl}=0 \Rightarrow Q_{kl} = 0$. However not all the networks in this set are physical, specifically when a subnetwork with unbalanced flow boundary conditions is separated out, and we need to exclude non-physical networks but keep the set compact. From assumption $\kappa_{kl} > 0 \; \forall \langle k,l\rangle =1$ results in a physical network, so a non-physical network must have a set of edges $k_i l_i$ with $\kappa_{k_i l_i} = 0$ for $i=1,2,...,n$. It suffices to show that physical networks with $\kappa_{k_1 l_1},...,\kappa_{k_n l_n} < \epsilon$ have dissipation uniformly converging to infinity as $\epsilon \to 0^+$, so we can exclude this set without excluding a possible global minimum. If $\kappa_{k_1 l_1},...,\kappa_{k_n l_n} = 0$ gives a non-physical network there will be a connected component $C$ connected by $kl \notin \{k_1 l_1,...,k_n l_n\}$ and with $\sum_{i\in C\cap \F} q_i \neq 0$ but $\Pn\cap C =\phi$. Without loss of generality let $q_{tot}=\sum_{i\in C\cap \F} q_i > 0$ and assume $k_1 l_1,...,k_m l_m$ with $m\leq n$ connect $C$ with the rest of the network, i.e. $k_i \in C$ and $l_i \notin C \; \forall 1\leq i \leq m$. Then the unbalanced flow in $C$ must flow out through $k_1 l_1,...,k_m l_m$ so

\begin{equation}
\sum_{i=1}^m Q_{k_i l_i} = q_{tot} \Rightarrow \exists 1\leq j \leq m \; s.t. \; Q_{k_j l_j} \geq \frac{q_{tot}}{m}.
\end{equation}
Then

\begin{equation}
D = \sum_{k<l,\langle k,l\rangle =1} \frac{Q_{kl}^2}{\kappa_{kl}} \geq \frac{Q_{k_j l_j}^2}{\kappa_{k_j l_j}} \geq \frac{q^2_{tot}}{m^2 \epsilon} \geq \frac{q^2_{tot}}{n^2\epsilon}.
\end{equation}
Since $q_{tot}$ is independent of $\epsilon>0$ the dissipation of physical networks in the set $\{\kappa_{k_i l_i} <\epsilon | 1\leq i \leq n\}$ goes to infinity uniformly as $\epsilon \to 0^+$. Now for each non-physical network we can identify all the edges with zero conductance and create this set, with $\epsilon>0$ chosen such that $\epsilon < \frac{K^2}{(\sum_{k>l, \langle k,l\rangle =1} d_{kl})^2}$, where $K$ is the prescribed material cost, and all the physical networks within this set have dissipation greater than that of the uniform conductance network, i.e. $\kappa_i = \frac{K^2}{(\sum_{k>l, \langle k,l\rangle =1} d_{kl})^2} \; \forall 1\leq i \leq E$. Then if we exclude this set of networks from $A$ we will obtain a non-empty set (since the uniform conductance network is in the set) and we will not exclude the global minimum (since the uniform conductance network has lower dissipation than all the physical networks in the excluded set). Now we repeat this procedure for all $k_1 l_1,...,k_n l_n$ if zero conductance on these edges produces a non-physical network. Since there are only finitely many of them and each operation produces a compact set we know the remaining set is still compact. Then a globally minimally dissipative network exists since continuous function always achieves its global minimum on compact sets. The proof for dissipation with material penalty (\ref{eq:constraint_material_penalty}) follows along the same lines except that now $A$ is defined by $ \{\sum_{k<l, \langle k,l\rangle =1}\kappa^\frac{1}{2}_{kl} d_{kl} \leq M|\kappa_i \geq 0\}$ and $M$ chosen to be larger than the dissipation with material penalty (\ref{eq:constraint_material_penalty}) of the uniform conductance network.

Finally we consider the complementary dissipation (\ref{func:tot_energy}) with material constraint (\ref{eq:material_constraint_d}). The proof is similar except that now we need to establish a uniform upper bound of $|\sum_{k\in \Pn}p_k \sum_{l \;:\;\langle k,l\rangle = 1} Q_{kl}|$ for all physical networks in $A$. Then since the pressure work term is continuous with the conductance and we can exclude non-physical networks once we have this bound we can prove the existence of global minimizer as above. Since the flows depend linearly on the boundary conditions we can write $Q_{kl} = Q\ub{f}_{kl} + Q\ub{p}_{kl}$, where $\{Q\ub{f}_{kl}\}$ is obtained by setting all pressure vertices to have zero pressure and keeping all the flow boundary conditions and $\{Q\ub{p}_{kl}\}$ by setting all the flow vertices to have zero flow (i.e. remove the flow boundary condition on all flow vertices) and keeping all the pressure boundary conditions. It suffices to bound the pressure work term in these flows separately. In the network with $Q\ub{p}_{kl}$ notice that the maximum principle applies, i.e. if we let $\bar{p} = \max_{p_i, i\in \Pn}, \ul{p} =\min_{p_i, i\in \Pn}$ we have

\begin{equation}
\ul{p}\leq p_i \leq \bar{p}
\end{equation} 
for all vertices, $i$, that are connected to a pressure vertex (let the set of $i\notin \Pn$ and $i$ connected to a pressure vertex be $C$). This is obvious if $i\in \Pn$. If $i\in C$ Kirchhoff's first law at vertex $i$ may be rewritten as:

\begin{equation}
p_i = \frac{\sum_{j\;:\;\langle i,j\rangle =1} p_j\kappa_{ij}}{\sum_{j\;:\;\langle i,j\rangle =1} \kappa_{ij}}
\label{eq:p_mean_value_principle}
\end{equation}
($\sum_{j\;:\;\langle i,j\rangle =1} \kappa_{ij} > 0$ since $i$ connects to a pressure vertex and hence must connect to at least one adjacent vertex). Suppose for contradiction that $\exists p_{i_0} < \ul{p}$ with $i_0\in C$. Then we can without loss of generality have $p_{i_0} \leq p_j \; \forall j\in C$, and for Equation (\ref{eq:p_mean_value_principle}) to hold we must have $p_j = p_{i_0} \; \forall \kappa_{j i_0} >0$. By assumption $i_0$ connects to a pressure vertex $k\in \Pn$ so $p_k = p_{i_0} < \ul{p}$, a contradiction. Similarly we can prove that $p_i\leq \bar{p}, \; \forall i\in C$. Thus if we let the maximum degree of all the vertices be $d$ we have

\begin{equation}
|\sum_{k\in \Pn} p_k \sum_{l\, :\, \langle k,l\rangle =1} Q_{kl}\ub{p}| \leq |\Pn|\max\{|\bar{p}|,|\ul{p}|\} (\bar{p}-\ul{p})d \frac{K^2}{\min\{d_{kl}\}^2}
\end{equation}
which is a uniform bound for all the physical networks satisfying the material constraint (\ref{eq:material_constraint_d}). Now we consider the pressure work term with $Q\ub{f}_{kl}$. Without loss of generality we can assume $|\F| = 1$ since we can split the flow boundary condition to $\{Q\ub{f,1}_{kl}\},...,\{Q\ub{f,|\F|}_{kl}\}$, where $\{Q\ub{f,i}_{kl}\}$ is the flow in which only the $i_{th}$ flow boundary condition is applied, and for concreteness we let $q_{i_f} < 0$ where $i_f$ denotes the only flow vertex in the network. Now we focus on a specific $\{Q\ub{f,i}_{kl}\}$ and abbreviate it as $\{Q_{kl}\}$. We claim that $0\leq \sum_{l\;:\;\langle k,l\rangle = 1} Q_{kl} \leq -q_{i_f} \; \forall k\in \Pn$. Suppose for contradiction that $ \sum_{l\;:\;\langle k_0,l\rangle = 1} Q_{k_0 l} < 0$ for a $k_0\in \Pn$. Then $\exists k_1$ such that $\langle k_1,k_0\rangle = 1$ and $p_{k_1}> p_{k_0}$ by the assumption. If $k_1 \in \Pn$ we have a contradiction since $p_{k_1} = p_{k_0} = 0$, so $k_1 \notin \Pn \cup \F$ or $k_1 \in \F$. In either case we have $\sum_{l\neq k_0\;:\;\langle k_1,l\rangle =1} Q_{k_1 l} = q_{k_1} - Q_{k_1 k_0}$, where $q_{k_1} = q_{i_f}$ if $k_1\in \F$ and is zero otherwise. Thus the left hand side sums to a non-positive number so we can find $k_2$ such that $\langle k_2,k_1\rangle =1$ and $p_{k_2}>p_{k_1}$. Following this procedure we can find distinct $k_0, k_1,...,k_n$ such that $p_{k_i}>p_{k_{i-1}} \; \forall 1\leq i \leq n$ (if any two of the vertices are the same they would have the same pressure). Since $n>0$ is arbitrary we can let $n =V$, the number of vertices, so one of $k_i$ must belong to $\Pn$, a contradiction. The statement $\sum_{l\;:\;\langle k,l\rangle = 1} Q_{kl} \leq -q_{i_f}$ comes from the fact

\begin{equation}
\sum_{k\in \Pn}\left( \sum_{l\;:\;\langle k,l\rangle = 1} Q_{kl} \right) = -q_{i_f}
\end{equation}
so if $\exists k_0 \in \Pn$ such that $\sum_{l\;:\;\langle k_0,l\rangle = 1} Q_{k_0 l} > -q_{i_f}$ there must be a $k'_0 \in \Pn$ such that $\sum_{l\;:\;\langle k'_0,l\rangle = 1} Q_{k'_0 l} <0$, a contradiction. Similar estimates for $q_{i_f} > 0$ can be obtained in the same manner. With the estimates on the inflow into pressure vertices we have

\begin{equation}
|\sum_{k\in \Pn} p_k \sum_{l\, :\, \langle k,l\rangle =1} Q_{kl}\ub{f}| \leq |\Pn|\max\{|\bar{p}|,|\ul{p}|\}|q_{i_f}|.
\end{equation}
With these estimates we established an upper bound for the pressure work term and hence the global minimizer for the complementary dissipation (\ref{func:tot_energy}) under material constraint (\ref{eq:material_constraint_d}).
\end{proof}

\section{Proof of Theorem \ref{thm:murray_law}} \label{sec:proof_murray}

\begin{proof}
Consider a physical network that does not satisfy Murray's law, and we will show that this is not a global minimizer of complementary dissipation (\ref{func:tot_energy}) under material constraint (\ref{eq:material_constraint_d}). Suppose our network has flows and conductances $\tilde{Q}_{kl}, \tilde{\kappa}_{kl}$, and assume for now that $\tilde{Q}_{kl} \neq 0 \; \forall \langle k,l\rangle =1$. Now define $\kappa_{kl}$ to be the conductances that satisfy Murray's law (\ref{eq:murray_law}) and the material constraint (\ref{eq:material_constraint_d}) based on the fluxes in our original network, i.e.

\begin{equation}
\kappa_{kl} = a\frac{|\tilde{Q}_{kl}|^\frac{4}{3}}{d^\frac{2}{3}_{kl}} \quad \forall \langle k,l\rangle =1,\quad K = \sum_{k>l,\langle k,l\rangle =1} \kappa^\frac{1}{2}_{kl} d_{kl}
\label{eq:apply_Murray_law}
\end{equation}

\noindent where $a>0$ is uniquely determined by the material constraint. We show that this comparative network has strictly smaller complementary dissipation (\ref{func:tot_energy}), i.e.

\begin{equation}
\sum_{k>l,\langle k,l\rangle =1} \frac{\tilde{Q}_{kl}^2}{\kappa_{kl}} - 2\sum_{k\in \Pn} p_k \sum_{l\, :\, \langle k,l\rangle =1} \tilde{Q}_{kl} < \sum_{k>l,\langle k,l\rangle =1} \frac{\tilde{Q}_{kl}^2}{\tilde{\kappa}_{kl}} - 2\sum_{k\in \Pn} p_k \sum_{l\, :\, \langle k,l\rangle =1} \tilde{Q}_{kl}.
\label{ineq:murray_energy_minimization}
\end{equation}
We show this inequality by proving that the conductances satisfying Murray's law is the global minimizer of complementary dissipation (\ref{func:tot_energy}) when flows $\tilde{Q}_{kl}$ are held constant and the material constraint (\ref{eq:material_constraint_d}) is imposed. Consider the dissipation with a Lagrange multiplier imposing material constraint (since the pressure work term does not change when $\tilde{Q}_{kl}$ are held fixed)

\begin{equation}
\Theta = \sum_{k>l,\langle k,l\rangle =1} \frac{\tilde{Q}_{kl}^2}{\kappa_{kl}} + \lambda (\sum_{k>l,\langle k,l\rangle =1} \kappa^\frac{1}{2}_{kl} d_{kl} - K).
\end{equation}

\noindent First we find the stationary points:

\begin{equation}
0= \frac{\partial\Theta}{\partial \kappa_{kl}} = - \frac{\tilde{Q}^2_{kl}}{\kappa^2_{kl}} + \frac{\lambda}{2} \kappa^{-\frac{1}{2}}_{kl} d_{kl} \Rightarrow \kappa_{kl} = 2^\frac{2}{3} \frac{|\tilde{Q}_{kl}|^\frac{4}{3}}{(\lambda d_{kl})^\frac{2}{3}} \quad \forall \langle k,l\rangle =1
\label{eq:murraykappa}
\end{equation}

\noindent which is Murray's law when Hagen-Poiseuille's law is applied. Now $\lambda$ can be solved for by plugging Equation (\ref{eq:murraykappa}) back into the material constraint (\ref{eq:material_constraint_d}). Since the material constraint (\ref{eq:material_constraint_d}) along with $\kappa_{kl}\geq 0 \; \forall \langle k,l\rangle =1$ forms a compact set this is the unique global minimum so long as no minima occur on the boundaries, i.e. there is no local minimum for which $\exists \langle k,l\rangle =1$ s.t. $\kappa_{kl} = 0$. However since $\tilde{Q}_{kl} \neq 0 \; \forall\langle k,l\rangle =1$ any $\kappa_{kl} = 0$ will result in $f=\infty$ and thus global minimizers cannot happen on boundaries. Since the conductances that satisfy Murray's law on the material constraint surface is the only stationary point in the interior, and we have dispensed with global minima on the boundary it must be the unique global minimizer, and the inequality (\ref{ineq:murray_energy_minimization}) holds. Now to finalize our proof we remove the assumption that $\tilde{Q}_{kl}\neq 0 \; \forall \langle k,l\rangle =1$. Then we need to show that the new conductances $\kappa_{kl}$ along with original boundary conditions yield a physical network under the assumption that $\tilde{\kappa}_{kl}$ with boundary conditions gives a physical network, and that the conductances $\kappa_{kl}$ that satisfy Murray's law is still the unique global minimizer. The first aspect is trivial in the case $\tilde{Q}_{kl} \neq 0 \; \forall \langle k,l\rangle =1$ since this condition implies that $\tilde{\kappa}_{kl}, \kappa_{kl} >0 \; \forall \langle k,l\rangle =1$. However $\tilde{Q}_{kl} = 0$ does not imply $\tilde{\kappa}_{kl} = 0$ while $\kappa_{kl}$ will be zero, and the concern is that applying Equation (\ref{eq:apply_Murray_law}) will produce a set of disconnected networks that are not physical networks. Consider a connected subnetwork of $\{\tilde{\kappa}_{kl}\}$ containing some edges with zero flows $\tilde{Q}_{k_1 l_1} = \tilde{Q}_{k_2 l_2}=\cdots = \tilde{Q}_{k_n l_n} = 0$ (the statement that a network is a physical network is equivalent to all its connected subnetworks being physical networks). Assume for contradiction that a connected component of this subnetwork $G_s$ is not a physical network with conductances $\kappa_{kl}$. By the non-physical network assumption we have $G_s\cap \Pn = \phi$ and $\sum_{k\in G_s\cap \F} q_k \neq 0$. However since $\tilde{Q}_{k_i l_i} = 0 \; \forall 1\leq i \leq n$ we have $\sum_{k\in G_s} \sum_{l\,:\, \langle k,l\rangle =1, l\in G_s} \tilde{Q}_{kl} = \sum_{k\in G_s} \sum_{l\,:\, \langle k,l\rangle =1} \tilde{Q}_{kl} =  \sum_{k\in G_s\cap \F} q_k \neq 0$, contradicting the fact that there is a well-defined pressure $\tilde{p}_k$ on $G_s$ since $\sum_{k\in G_s} \sum_{l\,:\, \langle k,l\rangle =1, l\in G_s} \tilde{Q}_{kl} = \sum_{k\in G_s} \sum_{l\,:\, \langle k,l\rangle =1, l\in G_s} (\tilde{p}_k -\tilde{p}_l)\tilde{\kappa}_{kl} = \sum_{k,l\in G_s,k>l,\langle k,l\rangle =1} (\tilde{p}_k -\tilde{p}_l)\tilde{\kappa}_{kl} + (\tilde{p}_l -\tilde{p}_k)\tilde{\kappa}_{kl} = 0$.

Now we address the second aspect; namely that the set of conductances $\kappa_{kl}$ that satisfy Murray's law is still the unique global minimizer of dissipation under fixed flow $\tilde{Q}_{kl}$. Let us enumerate all the links with zero flows by $k_1 l_1,...,k_n l_n$. We have $n<E$ where $E$ is the number of edges since if all the flows are zero the network will already satisfy the Murray's law (\ref{eq:murray_law}) with $a = 0$. It suffices to show that any network with $\kappa_{k_i l_i} >0$ for some $i\in \{1,...,n\}$ cannot be a global minimizer. Then we can restrict ourselves on the surface $\kappa_{k_1 l_1} = \cdots = \kappa_{k_n l_n} = 0$ and do the same calculation (when $\tilde{Q}_{kl} = 0 \; \forall \langle k,l\rangle =1$ the network already satisfies the Murray's law with the constant $a=0$, so this case can be excluded). However the result is immediate in this case because if we set $\kappa_{k_i l_i} = 0 \; \forall i\in I$ and scale the rest of the conductances up by a multiplicative constant we will strictly reduce the dissipation, so it cannot be a global minimizer.

Now we fix the conductances $\kappa_{kl}$ and change the flows in order to satisfy Kirchhoff's laws. We claim that among all the flows that satisfy conservation of mass and flow boundary conditions, i.e. $\sum_{l,\langle k,l\rangle=1} Q_{kl} = 0$ if $k\notin \Pn\cup\F$ and $\sum_{l,\langle k,l\rangle=1} Q_{kl} - q_k = 0$ if $k\in \F$ the Kirchhoff flow minimizes the function (\ref{func:tot_energy}) with $\kappa_{kl}$ fixed. Then since the original flow $\tilde{Q}_{kl}$ lies in this category we can show

\begin{equation}
\sum_{k>l,\langle k,l\rangle =1} \frac{Q_{kl}^2}{\kappa_{kl}} - 2\sum_{k\in \Pn} p_k \sum_{l\, :\, \langle k,l\rangle =1} Q_{kl} \leq \sum_{k>l,\langle k,l\rangle =1} \frac{\tilde{Q}_{kl}^2}{\kappa_{kl}} - 2\sum_{k\in \Pn} p_k \sum_{l\, :\, \langle k,l\rangle =1} \tilde{Q}_{kl}
\label{ineq:murray_flow_minimization}
\end{equation}

\noindent which finishes the proof. To see this we can impose the Lagrange multipliers for conservation of mass and flow boundary conditions on function (\ref{func:tot_energy}):

\begin{equation}
\Theta = \sum_{k>l,\langle k,l\rangle =1} \frac{Q_{kl}^2}{\kappa_{kl}} - 2\sum_{k\in \Pn} p_k \sum_{l,\langle k,l\rangle =1} Q_{kl} - \sum_{k\notin \Pn} \lambda_k \left(\sum_{l,\langle k,l\rangle =1} Q_{kl} - q_k\right)
\end{equation}

\noindent where $\lambda_k$ are Lagrange multipliers (for convenience we set $q_k = 0$ if $k\notin \Pn\cup \F$). To minimize this function we take derivatives and set them to zero:

\begin{equation}
0 = \frac{\partial\Theta}{\partial Q_{kl}} = \frac{2Q_{kl}}{\kappa_{kl}} - (\lambda_k - \lambda_l)
\label{eq:flow_der}
\end{equation}

\noindent and we define $\lambda_k = 2p_k$ if $k\in \Pn$. If we apply conservation of flux and flow boundary condition on $k\notin \Pn$ in terms of $\lambda_k$'s, i.e. substituting $Q_{kl}$'s by $\lambda_k$'s using Equation (\ref{eq:flow_der}), and impose $\lambda_k = 2p_k$ for $k\in \Pn$, then $\lambda_k$'s satisfy the exact same equations as the pressure under Kirchhoff's laws. We know from Section \ref{sec:notation} that if $\Pn \neq \phi$ then the pressure has a unique solution; otherwise the pressure is determined up to an additive constant, which has no effect on the flows. Therefore the flows $Q_{kl}$'s always have a unique solution. To show that Kirchhoff flow is a global minimum of the complementary dissipation (\ref{func:tot_energy}) notice that now the conservation of mass and flow boundary condition constraints might not give us a compact set, so there is no boundary. However $f$ has quadratic growth in flow through any link, so we can find $M>0$ s.t. $f>2b$ whenever $|Q_{kl}|>M$ for any $\langle k,l\rangle =1$, where $b$ is the value of the complementary dissipation $f$ for Kirchhoff flow. Then since $f$ has a global minimum in the compact set $|Q_{kl}|\leq M \; \langle k,l\rangle =1$ and it cannot be on the boundary it will have to be the Kirchhoff flow, which establishes that the Kirchhoff flow is the unique global minimizer of the complementary dissipation (\ref{func:tot_energy}) given fixed conductances $\kappa_{kl}$, which finishes the proof. 
\end{proof}

\section{Proof for Theorem \ref{thm:no_loop}} \label{sec:proof_no_loop}

\begin{proof}
Consider a physical network that contains a loop, $e$, with at least 3 points, i.e. $k_1,...,k_n$ with $\langle k_i, k_{i+1}\rangle =1, \kappa_{k_i k_{i+1}}>0 \; \forall 1\leq i \leq n$ (we set $k_{n+1} = k_1$) and $n\geq 3$, and let $C = \{(k_1,k_2),...,(k_{n-1},k_n), (k_n,k_1)\}$ be the set of ordered pairs denoting all the edges in the loop. Without loss of generality we can assume that the loop does not intersect itself, i.e. $|\{k_1,...,k_n\}| = n$; otherwise we can choose a non-selfintersecting subloop from it and proceed with the subloop. First we assume that $Q_{k_1 k_2},...,Q_{k_n k_1}$ are not all the same. We knew from Section \ref{sec:proof_murray} that adjusting conductances according to Murray's law under material constraint will decrease the dissipation without changing the pressure work term in the complementary dissipation function (\ref{func:tot_energy}) and that the resulting network will remain physical, so we can decrease the complementary dissipation by adjusting the conductances on the loop according to Murray's law with the material on the loop fixed. Therefore without loss of generality we can assume that $\exists a>0$ s.t. $\tilde{\kappa}_{k_i k_{i+1}} = a \frac{|\tilde{Q}_{k_i k_{i+1}}|^\frac{4}{3}}{d^\frac{2}{3}_{kl}} \; \forall 1\leq i \leq n$. Now we consider adding in a loop current $Q$, that is we add the same current $Q$ to each edge in the loop, and adjust the conductances by Murray's law under material constraint, i.e. set

\begin{equation}
Q_{kl} = \tilde{Q}_{kl} + Q\quad \textrm{and}\quad \kappa_{kl} = \mu \frac{Q^\frac{4}{3}_{kl}}{d_{kl}^\frac{2}{3}} \qquad \forall (k,l) \in C
\label{eq:apply_loop_current}
\end{equation}

\noindent where

\begin{equation}
\mu = \frac{K\subsc{loop}^2}{(\sum_{(k,l)\in C} Q^\frac{2}{3}_{kl} d^\frac{2}{3}_{kl})^2}, \quad K_{\mbox{\scriptsize loop}}\doteq \sum_{(k,l)\in C} \tilde{\kappa}_{kl}^\frac{1}{2}d_{kl}
\end{equation}

\noindent (we say $(k,l) \in C$ if the ordered pair $(k,l) = (k_i k_{i+1})$ for some $1\leq i\leq n$). Notice that for any $Q\in \R$ the new flows $Q_{kl}, (k,l)\in C$ along with the original flows outside of the loop $\tilde{Q}_{kl}, \langle k,l\rangle = 1, (k,l), (l,k)\notin C$ still satisfy conservation of mass and flow boundary conditions since the addition of $Q$ does not change the total flow into any of the vertices. If $\{k_1,...,k_n\} \cap \Pn = 0$ then changing the flow will only change the dissipation on the loop, and we only need to consider

\begin{equation}
D_{\mbox{\scriptsize loop}} \doteq \sum_{(k,l)\in C} \frac{Q_{kl}^2}{\kappa_{kl}}.
\label{func:loop_diss}
\end{equation}

\noindent Suppose that our contains a certain number of pressure vertices: $k_{n_1},...,k_{n_m}\in \Pn$ with $m\leq n$. For any $k_{n_j}$ if we restrict the sum $\sum_{l\,:\,\langle k_{n_j},l\rangle} Q_{k_{n_j}l}$ to edges in the loop, then it can be written as $Q_{k_{n_j} ,k_{n_j}+1} + Q_{k_{n_j} ,k_{n_j}-1}$ (recall that we assumed the loop has no self-interception). Since $Q_{kl} = \tilde{Q}_{kl} + Q$ we will have $Q_{k_{n_j} ,k_{n_j}+1} + Q_{k_{n_j} ,k_{n_j}-1} = \tilde{Q}_{k_{n_j} ,k_{n_j}+1} + \tilde{Q}_{k_{n_j} ,k_{n_j}-1} \; \forall Q\in\R$ and the pressure work term does not change. Thus in either case if we find flows and conductances on the loop that decrease the dissipation on the loop (\ref{func:loop_diss}) they will decrease the complementary dissipation (\ref{func:tot_energy}) as well. Therefore if we show that $D_{\mbox{\scriptsize loop}}$ strictly decreases after adding a loop current (\ref{eq:apply_loop_current}), then the Kirchhoff flow on the new network will have lower complementary dissipation by the argument in Section \ref{sec:proof_murray}, a contradiction. Calculate

\begin{equation}
D_{\mbox{\scriptsize loop}} = \sum_{(k,l)\in C} \frac{Q_{kl}^2}{\kappa_{kl}} = \sum_{(k,l)\in C}  \frac{Q_{kl}^\frac{2}{3} d_{kl}^\frac{2}{3}}{\mu} = \frac{(\sum_{(k,l)\in C} Q^\frac{2}{3}_{kl} d^\frac{2}{3}_{kl})^3}{K_{\mbox{\scriptsize loop}}^2}.
\label{func:diss_loopcurrent}
\end{equation}

\noindent The derivative with respect to $Q$ is (we let $A = \sum_{(k,l)\in C} Q^\frac{2}{3}_{kl} d^\frac{2}{3}_{kl}$ for simplicity on notations)

\begin{equation}
\frac{d D_{\mbox{\scriptsize loop}}}{d Q} = \frac{2A^2}{K\subsc{loop}^2} \sum_{(k,l)\in C} Q^{-\frac{1}{3}}_{kl} d^\frac{2}{3}_{kl}.
\label{eq:loop_current_der}
\end{equation}

\noindent Since $Q_{kl}$ are not all the same for $(k,l)\in C$ we have $A> 0$ (and $K\subsc{loop}>0$ by definition) and the factor $\frac{2A^2}{K\subsc{loop}^2}$ is always positive, so the sign of derivative depends only on $\sum_{(k,l)\in C} Q^{-\frac{1}{3}}_{kl} d^\frac{2}{3}_{kl}$ in this case (we will discuss the case $A=0$ later). Now we show that $Q_{kl} = 0$ for some $(k,l)\in C$ is always a local minimum. Suppose $Q_{kl} = \epsilon$ where $\epsilon \to 0^+$. Then $Q^{-\frac{1}{3}}_{kl} \to \infty$ and we will have $\frac{d D_{\mbox{\scriptsize loop}}}{d Q}  > 0$. The same argument applies to $Q_{kl} = -\epsilon$ so $Q_{kl} = 0$ is indeed a local minimum. To show that global minima can only happen when $Q_{kl} = 0$ for some $(k,l)\in C$ notice that there exists at least one global minimum since $D_{\mbox{\scriptsize loop}} \to \infty$ as $Q\to \pm \infty$ and $D_{\mbox{\scriptsize loop}}$ is a continuous function of $Q$. This global minimum may be attained only where $\frac{d D_{\mbox{\scriptsize loop}}}{d Q}  = 0$ or if the derivative is not defined. For the derivative to be not defined we will have at least one $Q_{kl} = 0$, which corresponds to a local minimum with a cusp in $D_{\mbox{\scriptsize loop}}$ as discussed. Now suppose $\frac{d D_{\mbox{\scriptsize loop}}}{d Q}  = 0$ so $B\doteq \sum_{(k,l)\in C} Q^{-\frac{1}{3}}_{kl} d_{kl}^\frac{2}{3} = 0$ and $Q_{kl}\neq 0 \; \forall (k,l)\in C$. Then we can take the second derivative:

\begin{equation}
\frac{d^2 D_{\mbox{\scriptsize loop}}}{d Q^2} = \frac{8AB^2}{3K\subsc{loop}^2} - \frac{2A^2}{3K\subsc{loop}^2}\sum_{(k,l)\in C} Q^{-\frac{4}{3}}_{kl} d^\frac{2}{3}_{kl} <0
\end{equation}

\noindent since by assumption $Q_{kl} \neq 0 \; \forall (k,l)\in C$. Thus the local extrema with $Q_{kl}\neq 0 \; \forall (k,l)\in C$ are all local maxima, and a global minimum will happen only if $\exists (k,l)\in C$ s.t. $Q_{kl} = 0$. Now we fix the conductances $\kappa_{kl} \; \forall (k,l)\in C$ and the original conductances outside the loop $\tilde{\kappa}_{kl} \; \forall (k,l),(l,k)\notin C$ and change the flow to Kirchhoff flow. If this is a physical network then as we have seen in Section \ref{sec:proof_murray} this process strictly decreases the complementary dissipation if the flow is not already the Kirchhoff flow, and the proof finishes since the step of adding a loop current $Q$ strictly decreases the dissipation on the loop and thus the complementary dissipation since a loop cannot be a global minimizer. It remains to show that the resulting network is a physical network. Suppose for contradiction that after adding a loop current $Q$ we can produce a non-physical connected subnetwork $G_s$ of $\{\kappa_{kl}\}$ by deleting the zero flux edges (when $(k,l)\notin C$ let $\kappa_{kl} = \tilde{\kappa}_{kl}$ be the original conductance since the procedure (\ref{eq:apply_loop_current}) does not change conductances outside of the loop). Similar to the proof in Section \ref{sec:proof_murray} it suffices to show that the original flow $\{\tilde{Q}_{kl}\}$ satisfies $\sum_{k\in G_s} \sum_{l\; : \; \langle k,l\rangle = 1} \tilde{Q}_{kl} = \sum_{k\in G_s} \sum_{l\; : \; \langle k,l\rangle =1, l\in G_s} \tilde{Q}_{kl}$ since the non-physical network assumption implies $\sum_{k\in G_s} \sum_{l\; : \; \langle k,l\rangle =1} \tilde{Q}_{kl} = \sum_{k\in \F\cap G_s} q_k \neq 0$, contradicting that $\sum_{k\in G_s} \sum_{l\; : \; \langle k,l\rangle = 1, l\in G_s} \tilde{Q}_{kl} = 0$ . To establish the equality we split the sum into the parts $k\in G_s \bs e$ and $k\in G_s\cap e$ where $e=\{k_1,...,k_n\}$ is the set of vertices in the loop. The equality $\sum_{k\in G_s \bs e} \sum_{l\; : \; \langle k,l\rangle = 1} \tilde{Q}_{kl} = \sum_{k\in G_s \bs e} \sum_{l\; : \; \langle k,l\rangle =1, l\in G_s} \tilde{Q}_{kl}$ holds because for $k\in G_s$ any edge connecting it does not lie in $C$, so $\langle k,l\rangle =1, l\notin G_s$ implies $0 = \kappa_{kl} = \tilde{\kappa}_{kl}$ and $\tilde{Q}_{kl} = 0$. When $k\in G_s\cap e$ we will have to consider connected components of $G_s\cap e$ of $\{\kappa_{kl}\}$ restricted in the loop $C$. Let $G_{1},...,G_{m}, m\leq n$ be those connected components, i.e. if $k\in G_{i}, l\in G_{j}, i\neq j$ then there is no path $k=l_1,...,l_h = l$ with $(l_i,l_{i+1})$ or $(l_{i+1},l_i)\in C, \kappa_{l_i l_{i+1}}>0 \; \forall 1\leq i \leq h-1$. Now consider a specific $G_{i}$ and let $k\ub{i}_1, k\ub{i}_2$ be its two end vertices (the only two vertices that are connected to only one vertex in $G_{i}$ by edges in $C$), with $l\ub{i}_1, l\ub{i}_2$ be the neighboring vertices in the loop that are not in $G_{i}$, i.e. $(k\ub{i}_1,l\ub{i}_1), (l\ub{i}_2,k\ub{i}_2)\in C, l\ub{i}_j \notin G_i, j=1,2$ (the order switching comes from the orientation of the edges). Then $\sum_{k\in G_{i}} \sum_{l\: : \; \langle k,l\rangle =1} \tilde{Q}_{kl} = \sum_{k\in G_{i}} \sum_{l\: : \; \langle k,l\rangle =1, l\in G_s} \tilde{Q}_{kl} + \sum_{j=1,2} \tilde{Q}_{k\ub{i}_j l\ub{i}_j}$ since again we do not have to consider flows on edges that are not in the loop. Now the sum $\sum_{j=1,2} \tilde{Q}_{k\ub{i}_j l\ub{i}_j}$ because $\kappa_{k\ub{i}_j l\ub{i}_j} = 0, j=1,2$ indicates that $\tilde{Q}_{k\ub{i}_1 l\ub{i}_1} = - \tilde{Q}_{k\ub{i}_2 l\ub{i}_2}$ since this is the only circumstance that an addition of a loop current eliminates both edges (the minus sign again comes from the orientation of the edges). Therefore $\sum_{k\in G_s\cap e} \sum_{l\: : \; \langle k,l\rangle =1, l\in G_s} \tilde{Q}_{kl} = \sum_{i=1}^m \sum_{k\in G_i}\sum_{l\: : \; \langle k,l\rangle =1, l\in G_s} \tilde{Q}_{kl} =  \sum_{i=1}^m \sum_{k\in G_i}\sum_{l\: : \; \langle k,l\rangle =1} \tilde{Q}_{kl} = \sum_{k\in G_s\cap e} \sum_{l\: : \; \langle k,l\rangle =1} \tilde{Q}_{kl}$ and the non-physical network hypothesis leads to a contradiction.

Now assume $\tilde{Q}_{kl} = Q_0 \in \R \; \forall (k,l)\in C$. In this case we must have $Q_0 = 0$ since otherwise when $Q_0 >0$ we will have $p_{k_1} >p_{k_2}>\cdots > p_{k_m} > p_{k_1}$, a contradiction, and similarly for $Q_0<0$. By assumption the network has at least one edge that has flow in it and does not comprise the loop, i.e. there is an edge $kl$ such that $(k,l), (l,k)\notin C$ and $Q_{kl}\neq 0$. Since the loop carries no flow we can set $\kappa_{kl} = 0 \; \forall (k,l)\in C$ without changing the complementary dissipation. To show that adding these materials back to edges with flows in them strictly decreases the complementary dissipation we prove a generalized Rayleigh's principle that allows for Dirichlet boundary conditions.
\begin{lemma}[Rayleigh's Principle]
The complementary dissipation (\ref{func:tot_energy}) monotonically decreases with the conductance of each edge, i.e. if we let $\{\tilde{\kappa}_{kl}\}, \{\tilde{Q}_{kl}\}$ be the sets of conductances and flows that satisfy all boundary conditions and $\{\kappa_{kl}\}, \{Q_{kl}\}$ be another sets of conductances with $Q_{kl}$ being the Kirchhoff flows, and they are on the same network with the same boundary conditions, then

\begin{equation}
\kappa_{kl}\geq \tilde{\kappa}_{kl} \quad \forall \langle k,l\rangle =1 \Rightarrow \sum_{k>l, \langle k,l\rangle =1} \frac{Q^2_{kl}}{\kappa_{kl}} - 2\sum_{k\in \Pn} p_k \sum_{l\; :\; \langle k,l\rangle =1} Q_{kl} \leq \sum_{k>l, \langle k,l\rangle =1} \frac{\tilde{Q}^2_{kl}}{\tilde{\kappa}_{kl}} - 2\sum_{k\in \Pn} p_k \sum_{l\; :\; \langle k,l\rangle =1} \tilde{Q}_{kl}.
\end{equation}
Moreover, if $\kappa_{kl}>\tilde{\kappa}_{kl}$ on an edge with $\tilde{Q}_{kl}\neq 0$, then the inequality holds.
\end{lemma}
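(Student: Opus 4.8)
The plan is to deduce the lemma from the variational characterization of Kirchhoff flow established in Section~\ref{sec:proof_murray}: with the conductances held fixed, the Kirchhoff flow is the unique global minimizer of the complementary dissipation (\ref{func:tot_energy}) among all flows that obey conservation of mass at non-pressure vertices and the prescribed flow boundary conditions at Neumann vertices. The observation that makes this applicable is that these admissibility constraints are phrased purely in terms of net flows into vertices and do \emph{not} reference the conductances. Consequently the comparison flow $\tilde{Q}_{kl}$, which is admissible for the network $\{\tilde{\kappa}_{kl}\}$, is equally admissible as a competitor for the network $\{\kappa_{kl}\}$.

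First I would apply the minimizing property to the network $\{\kappa_{kl}\}$. Since $Q_{kl}$ is its Kirchhoff flow and $\tilde{Q}_{kl}$ is an admissible competitor, the variational principle gives
\begin{equation}
\sum_{k>l,\langle k,l\rangle =1}\frac{Q_{kl}^2}{\kappa_{kl}} - 2\sum_{k\in\Pn} p_k\!\!\sum_{l\,:\,\langle k,l\rangle =1}\!\! Q_{kl} \;\le\; \sum_{k>l,\langle k,l\rangle =1}\frac{\tilde{Q}_{kl}^2}{\kappa_{kl}} - 2\sum_{k\in\Pn} p_k\!\!\sum_{l\,:\,\langle k,l\rangle =1}\!\! \tilde{Q}_{kl}.
\end{equation}
Next I would compare the right-hand side, evaluated at the fixed flow $\tilde{Q}_{kl}$, between the two conductance sets. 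The pressure-work term $-2\sum_{k\in\Pn}p_k\sum_l \tilde{Q}_{kl}$ depends only on the (fixed) prescribed pressures and on $\tilde{Q}_{kl}$, so it is identical whether the conductances are $\kappa_{kl}$ or $\tilde{\kappa}_{kl}$; the entire difference therefore lives in the dissipation and equals $\sum_{k>l,\langle k,l\rangle =1}\tilde{Q}_{kl}^2\bigl(\kappa_{kl}^{-1}-\tilde{\kappa}_{kl}^{-1}\bigr)$. Because $\kappa_{kl}\ge\tilde{\kappa}_{kl}$ forces $\kappa_{kl}^{-1}\le\tilde{\kappa}_{kl}^{-1}$, every term is nonpositive, so the right-hand side above is bounded by the complementary dissipation of the original network. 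Chaining the two inequalities yields the monotonicity claim.

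For the strict statement I would isolate the single edge on which $\kappa_{kl}>\tilde{\kappa}_{kl}$ and $\tilde{Q}_{kl}\neq 0$: there $\tilde{\kappa}_{kl}>0$ (else the dissipation of the original network would be infinite), and that term $\tilde{Q}_{kl}^2\bigl(\kappa_{kl}^{-1}-\tilde{\kappa}_{kl}^{-1}\bigr)$ is strictly negative while all others remain nonpositive, making the middle inequality strict and hence the whole chain strict. The point requiring the most care, and the main obstacle, is the bookkeeping for edges of vanishing conductance: I would adopt the convention $Q_{kl}^2/\kappa_{kl}=0$ whenever $\kappa_{kl}=0$ (so that the flow there vanishes), and observe that $\kappa_{kl}\ge\tilde{\kappa}_{kl}$ means a zero $\kappa_{kl}$ forces $\tilde{\kappa}_{kl}=0$ as well, so both sides contribute a vanishing flow and no indeterminate $0/0$ term ever appears. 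The remaining verification, that $\tilde{Q}_{kl}$ really is an eligible competitor for the minimization over $\{\kappa_{kl}\}$, is immediate from the conductance-independence of the conservation and flow-boundary constraints noted above.
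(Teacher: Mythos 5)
Your proposal is correct and follows essentially the same route as the paper: both establish the chain $f(\kappa,Q)\le f(\kappa,\tilde Q)\le f(\tilde\kappa,\tilde Q)$, where one link is the variational (Thomson-type) characterization of the Kirchhoff flow from Section~\ref{sec:proof_murray} (valid because the admissibility constraints do not involve conductances) and the other is the termwise monotonicity of $\tilde Q_{kl}^2/\kappa_{kl}$ in $\kappa_{kl}$ with the pressure-work term frozen, with strictness coming from the distinguished edge exactly as in the paper. The only difference is cosmetic: you present the two inequalities in the opposite order from the paper's exposition.
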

\begin{proof}
To show the inequality we change the conductances and flows in two steps (we can without loss of generality change $\tilde{Q}_{kl}$ to the Kirchhoff flows corresponding to $\tilde{\kappa}_{kl}$ since from Section \ref{sec:proof_murray} we know doing so reduces the complementary dissipation). First we change the set of conductances from $\{\tilde{\kappa}_{kl}\}$ to $\{\kappa_{kl}\}$ and show that the complementary dissipation with the non-Kirchhoff flows $\tilde{Q}_{kl}$ decreases. Then we relax the flows to Kirchhoff flows $Q_{kl}$, which we know decreases the complementary dissipation from Section \ref{sec:proof_murray}. In the first step we can ignore the pressure work term $\sum_{k\in \Pn} p_k \sum_{l\; :\; \langle k,l\rangle =1} \tilde{Q}_{kl}$ since the flows remain unchanged and the pressures are prescribed. Then the fact $\kappa_{kl}\geq \tilde{\kappa}_{kl}$ implies that $\sum_{k>l, \langle k,l\rangle =1} \frac{\tilde{Q}^2_{kl}}{\kappa_{kl}} \leq \sum_{k>l, \langle k,l\rangle =1} \frac{\tilde{Q}^2_{kl}}{\tilde{\kappa}_{kl}}$, which finishes the proof. The strict inequality comes from that $\frac{\tilde{Q}^2_{kl}}{\kappa_{kl}} < \frac{\tilde{Q}^2_{kl}}{\tilde{\kappa}_{kl}}$ if $\tilde{Q}_{kl} \neq 0$ and $\kappa_{kl} > \tilde{\kappa}_{kl}$.
\end{proof}

If we let $\{\tilde{\kappa}_{kl}\}$ to be the set of original conductances but with $\tilde{\kappa}_{kl} = 0 \; \forall (k,l)\in C$, and $\{\tilde{Q}_{kl}\}$ be the set of the original flows, since $\sum_{k>l, \langle k,l\rangle =1} \tilde{\kappa}_{kl}^\frac{1}{2}d_{kl} < K$ we can find a new set of conductances $\{\kappa_{kl}\}$ with $\kappa_{kl} \geq \tilde{\kappa}_{kl}, \sum_{k>l, \langle k,l\rangle =1} \kappa_{kl}^\frac{1}{2}d_{kl} = K, \kappa_{kl} = 0 \; \forall (k,l)\in C$, and $\exists \langle k,l\rangle =1$ such that $\kappa_{kl} > \tilde{\kappa}_{kl}, \tilde{Q}_{kl}\neq 0$, then by Rayleigh's principle the complementary dissipation of $\{\kappa_{kl}\}, \{Q_{kl}\}$ will be strictly less than that of $\{\tilde{\kappa}_{kl}\}, \{\tilde{Q}_{kl}\}$ and the proof follows.
\end{proof}

\section{Proof for Theorem \ref{thm:no_pressure_path}} \label{sec:proof_no_pressure_path}

\begin{proof}
To prove that there is no path connecting 2 pressure vertices with the same pressure suppose there is a path $k_1,...,k_n$ with $\langle k_i k_{i+1}\rangle=1, \kappa_{k_i k_{i+1}} > 0 \; \forall i=1,...,n-1$, $n\geq 2$, and $k_1,k_n \in \Pn$ with $p_{k_1} = p_{k_n}$. As in Section \ref{sec:proof_no_loop} we can redistribute the conductances in the path to satisfy Murray's law with the material in this path held constant without increasing the complementary dissipation (\ref{func:tot_energy}). Without loss of generality We may assume that this path does not self-intersect because we can otherwise extract a subpath that does not self-intersect. Since we will only adjust flow and conductances on the path we can again restrict our attention to contribution of this path to the complementary dissipation:

\begin{equation}
f\subsc{path} = \sum_{(k,l)\in C} \frac{Q^2_{kl}}{\kappa_{kl}} - 2\sum_{k\in C_p} p_k \sum_{l\,:\,\langle k,l\rangle = 1, l\in C_n} Q_{kl}.
\end{equation}

\noindent Here as before we let $C$ be the set of ordered pairs of edges in the path, i.e. $C=\{(k_1,k_2),...,(k_{n-1},k_n)\}$, and $C_p$ be the set of all the pressure vertices in the path, and $C_n = \{k_1,...,k_n\}$. Now we add in a path current $Q$ that resembles the loop current in Section \ref{sec:proof_no_loop}, i.e.

\begin{equation}
Q_{kl} = \tilde{Q}_{kl} + Q,\quad \kappa_{kl} = \mu \frac{Q^\frac{4}{3}_{kl}}{d_{kl}^\frac{2}{3}} \qquad \forall (k,l) \in C
\label{eq:path_current}
\end{equation}

\noindent where

\begin{equation}
\mu = \frac{K\subsc{path}^2}{(\sum_{(k,l)\in C} Q^\frac{2}{3}_{kl} d^\frac{2}{3}_{kl})^2}, \quad K_{\mbox{\scriptsize path}}\doteq \sum_{(k,l)\in C} \tilde{\kappa}_{kl}^\frac{1}{2}d_{kl}
\label{eq:path_current_material_constraint}
\end{equation}

\noindent and $\tilde{Q}_{kl}, \tilde{\kappa}_{kl}$ denote the original flow and conductance, which according to Theorem \ref{thm:murray_law}, are related via Murray's law. We can see that if $k\in C_p$ but $k\neq k_1, k_n$ then $\sum_{l\,:\,\langle k,l\rangle = 1, l\in C_n} Q_{kl}$ consists of 2 terms in which the path current $Q$ cancels, so adding path current does not affect the pressure work terms for these vertices. Similarly the original flows are constants in the pressure work term and can be ignored if we only wish to tease out the dependence of $f\subsc{path}$. Thus up to an additive constant:

\begin{equation}
f\subsc{path} = \sum_{(k,l)\in C} \frac{Q^2_{kl}}{\kappa_{kl}} - 2(p_{k_1} - p_{k_n})Q = \sum_{(k,l)\in C} \frac{Q^2_{kl}}{\kappa_{kl}} = D\subsc{path}
\end{equation}

\noindent where the sign comes from our convention that the path current flows out of $k_1$ but flows into $k_n$. Thus the complementary dissipation reduces to dissipation on the path in this case. Also notice that adding a path current will not affect the conservation of mass and flow boundary conditions since it adds no flow to $k_2,...,k_{n-1}$ and $k_1,k_n$ are pressure vertices and do not have prescribed inflow, so if the procedure (\ref{eq:path_current}) strictly reduces the dissipation on the loop $f\subsc{path}$ we can relax the flows to Kirchhoff flows without increasing the complementary dissipation, which leads to a contradiction. Since $D\subsc{path}$ has the same form as $D\subsc{loop}$ in Section \ref{sec:proof_no_loop} we can prove in the same way that if $Q_{kl}$ are not all the same for $(k,l)\in C$ the global minimum only happens when $Q_{kl} = 0$ for some $(k,l)\in C$, which leads to a contradiction, and if $Q_{kl} = Q_0 \; \forall (k,l)\in C$ we must have $Q_0 = 0$ or we will have $p_{k_1}\neq p_{k_n}$, a contradiction. In this case by assumption we have an edge $kl$ with $(k,l), (l,k)\notin C$ and $Q_{kl}\neq 0$. Then similarly to Section \ref{sec:proof_no_loop} we can remove the materials on the path $C$ and apply Rayleigh's principle to decrease the complementary dissipation, which finishes the argument. One last issue needed to be addressed is whether cutting this path will result in a non-physical network. As in Section \ref{sec:proof_no_loop} we can define connected segments in the path after the path is cut and suppose for contradiction that there is a subnetwork connected to multiple connected segments. If this segment contains $k_1$ or $k_n$ then there is at least one pressure vertex and this subnetwork is physical. Otherwise this subnetwork connects to only connected segments in the middle which have the same original flows into and out of them, and we have $0 = \sum_{k\in G_s} \sum_{l\; :\; \langle k,l\rangle =1, l\in G_s} \tilde{Q}_{kl} = \sum_{k\in G_s} \sum_{l\; :\; \langle k,l\rangle =1} \tilde{Q}_{kl} \neq 0$ where $G_s$ is the non-physical subnetwork after cutting the path, a contradiction.
\end{proof}

\section{Proof for Proposition \ref{prop:diss_no_pressure_path}} \label{sec:proof_diss_no_pressure_path}

\begin{proof}
Suppose we start with a physical network that globally minimizes the dissipation (\ref{func:diss}) under the material constraint (\ref{eq:material_constraint_d}) with $n\doteq |\Pn|\geq 2$ (otherwise there is nothing to prove). Since the number of paths connecting two different pressure vertices is finite we can assume that there is a finite number of paths linking pressure vertices. On any path we can decrease the dissipation restricted on the path by adding a path current and adjust the conductances according to Murray's law as in Section \ref{sec:proof_no_pressure_path} in the case that not all the flows (with sign determined by the path direction) are the same, and by simply reducing all the flows to zero if they all agree and eliminating the whole path while scaling up the rest of the network by a multiplicative constant to meet the material constraint (\ref{eq:material_constraint_d}), given that this path does not comprise all the network. This procedure strictly reduces the dissipation since in the case not all the flows on the path are the same we will cut a proper set of them as in Section \ref{sec:proof_no_pressure_path}, which strictly decreases the dissipation. In case where all the flows are the same on the path because $|\F|\neq \phi$ and a flow vertex cannot lie on this path (otherwise the flows will not all be the same) so there will always be an edge not in this path with nonzero flow. Thus we can eliminate each path one at a time, strictly decreasing the dissipation while still satisfying the conservation of mass and flow boundary conditions and also the network remaining physical so long as we are not taking out the last path, in which case we have to worry about this path comprising the whole network. Up to now we do not solve for the flows according to Kirchhoff's laws since this might not decrease dissipation (it is only guaranteed to decrease the complementary dissipation). Notice that while we might take out multiple paths at a time in case of several paths sharing common links, the number of paths will never increase since no new edge with positive conductance can be created in this process. If we never reach the situation where we need to take out the last path, which is possible because eliminating one path may also disconnect others, then we do not have to worry about the path we are taking out might comprise the whole network (since there are other distinct paths), and we reach a network with connected components $G_1,...,G_m$ with $m\geq n$ since each component can contain at most one pressure vertex. Then the complementary dissipation function becomes

\begin{equation}
f = \sum_{k>l,\langle k,l\rangle =1} \frac{Q^2_{kl}}{\kappa_{kl}} - 2\sum_{k=1}^n p_k \sum_{l\,:\,\langle k,l\rangle =1} Q_{kl}
\end{equation}

\noindent if we without loss of generality let $k= 1,...,n$ be the pressure vertices in $G_1,...,G_n$ respectively. Since $Q_{kl} = 0$ when $k\in G_i, l\in G_j$ when $i\neq j$ we can isolate the contribution of a component $G_i$ to the complementary dissipation, starting from:

\[
0 = \sum_{k,l\in G_i,\langle k,l\rangle =1} (p_k-p_l)\kappa_{kl} = \sum_{k\in G_i} \left( \sum_{l\in G_i\,:\,\langle k,l\rangle = 1} Q_{kl}\right)
\]

\begin{equation}
= \sum_{k\in G_i}\left( \sum_{l\,:\,\langle k,l\rangle = 1} Q_{kl}\right) = \sum_{l\,:\,\langle i,l\rangle =1} Q_{il} + \sum_{k\in G_i\cap \F} q_k.
\end{equation}

\noindent Thus

\begin{equation}
f = \sum_{k>l,\langle k,l\rangle =1} \frac{Q^2_{kl}}{\kappa_{kl}} + 2\sum_{i=1}^n p_i \sum_{k\in G_i\cap \F} q_k = D + C
\end{equation}

\noindent where $C$ is constant for any flow field $Q_{kl}$ that satisfies conservation of mass, the prescribed flow boundary condition, and $Q_{kl} = 0$ whenever $\kappa_{kl} = 0$, which is necessary for $f<\infty$ thus necessary for $Q_{kl}$ being a global minimizer of $f$ when $\kappa_{kl}$'s are fixed. So, if $Q_{kl}$ is not already the Kirchhoff flow and we change the current $Q_{kl}$'s to the Kirchhoff flow (the network is physical according to the same argument in Section \ref{sec:proof_no_pressure_path}) then we will decrease the complementary dissipation, which is now equivalent to decreasing dissipation. Thus flow adjusting gives us a network with strictly smaller smaller dissipation, contradicting our assumption that we were starting with a global minimizer.

To complete our proof we must consider the case that we do need to disconnect the last path and this last path has constant flow on it. This path cannot comprise the whole network because if it were to comprise the entire network from the assumption $\F\neq \phi$ we must have $k_i\in \F$ for $1<i<N$ where $N$ denotes the number of vertices in this path (that is all the path vertices between $i=1$ and $i=N$, exclusively, are flow vertices) and there is at least one such interior vertex, or there is an isolated $k\in \F$ that does not connect to any other vertex, which cannot be true for a physical network. Then $\sum_{l\,:\,\langle k_i,l\rangle =1} Q_{k_i l} = 0$, a contradiction to the fact that $k_i$ is a flow vertex. Thus we can disconnect the last path in any case and the argument goes through as before to a contradiction.

\end{proof}

\section{Proof for Proposition \ref{prop:material_formulation_scale_invar}} \label{sec:proof_scale_invar}

\begin{proof}
The material-invariance property of minimally dissipative network under material constraint (\ref{eq:material_constraint_d}) means that all the global minimizers will have the same dissipation if their materials are scaled to be the same. To see this suppose $\{\kappa_{kl}\}, \{\kappa'_{kl}\}$ are minimally dissipative networks with material $K, K'$. Consider the network $\{\beta\kappa_{kl}\}$ with $\beta = (\frac{K'}{K})^2$, so that $\{\beta\kappa_{kl}\}$ has material $K'$. Then since $\{\kappa'_{kl}\}$ is a minimally dissipative network with material $K'$ we have

\begin{equation}
\frac{D}{\beta}\doteq \sum_{k>l,\langle k,l\rangle = 1} \frac{Q_{kl}^2}{\beta \kappa_{kl}} \geq  \sum_{k>l,\langle k,l\rangle = 1} \frac{Q_{kl}^{'2}}{\kappa'_{kl}} = D'.
\end{equation}
Similarly if we define $\beta' = (\frac{K}{K'})^2 = \frac{1}{\beta}$ we have

\begin{equation}
\frac{D'}{\beta'} \geq D \Rightarrow D' = \frac{D}{\beta}
\end{equation}
and the networks $\{\kappa_{kl}\}, \{\kappa'_{kl}\}$ have the same dissipation if their materials are scaled to be the same. This implies if $\{\hat{\kappa}_{kl}\}$ is a minimally dissipative network with material $K=1$, then $\{\beta \hat{\kappa}_{kl}\}$ is a minimally dissipative network of any $K>0$ with $\beta = K^2$. While $\{\hat{\kappa}_{kl}\}$ is not unique since all the minimally dissipative networks with $K=1$ have the same dissipation it does not matter which network we use. Now consider a minimally dissipative network $\{\kappa_{kl}\}$ with material penalty under coefficient $a$ (\ref{eq:constraint_material_penalty}). Suppose this network has material $K$. The network must be a minimally dissipative network with material constraint $K$. If it were not also the minimally dissipative network, then the minimally dissipative network would have a smaller value of $\Theta$ in Equation (\ref{eq:constraint_material_penalty}). Thus we can assume $\kappa_{kl} = \beta\hat{\kappa}_{kl}$ where $\beta = K^2$. A concern is that global minimizers of (\ref{eq:constraint_material_penalty}) under the same coefficient $a$ may have different amounts of material. However since they all have the form $\{\beta\hat{\kappa}_{kl}\}$ for some unit network $\{\hat{\kappa}_{kl}\}$ we can calculate

\begin{equation}
\Theta = \sum_{k>l,\langle k,l\rangle =1} \frac{\hat{Q}^2_{kl}}{\beta \hat{\kappa}_{kl}} + a\sum_{k>l,\langle k,l\rangle =1} \beta^\frac{1}{2}\hat{\kappa}^\frac{1}{2}_{kl}d_{kl} = \frac{\hat{D}}{\beta} + a\beta^\frac{1}{2}.
\end{equation}
If $\{\beta\hat{\kappa}_{kl}\}$ is truly a global minimizer the derivative must vanish since $\Theta\to \infty$ as $\beta\to 0^+,\infty$, i.e.

\begin{equation}
0 = \frac{d\Theta}{d\beta} = -\frac{\hat{D}}{\beta^2} + \frac{a}{2}\beta^{-\frac{1}{2}} \Rightarrow \beta = (\frac{2\hat{D}}{a})^\frac{2}{3}.
\end{equation}
Since $\beta = K^2$ where $K$ is the material of the network $\{\beta\hat{\kappa}_{kl}\}$ we have

\begin{equation}
K = (\frac{2\hat{D}}{a})^\frac{1}{3}
\end{equation}
and in particular, all networks must have the same value of $K$. This bijection $K(a)$ between material constraint and coefficient of material penalty shows that the two different formulations are equivalent for minimally dissipative network under the material-invariance assumption.
\end{proof}

\section{Discussion} \label{sec:discussion}

To summarize our mathematical results we gave a rigorous proof that Murray's law is a necessary condition for global minimization of complementary dissipation (\ref{func:tot_energy}), and showed that it is also necessary for minimally dissipative networks when a flow vertex is present. We proved that under general boundary conditions a global minimizer of complementary dissipation has no loops and does not connect pressure vertices with the same specified pressure. When a flow vertex is present these results recover Durand's previous proof for the no-loop property of minimally dissipative networks. Finally we proved that imposing material as constraint or penalty are equivalent for minimally dissipative networks but is not the case in general. We prove previous results on minimally dissipative networks with mathematical rigor, and extend them to general boundary conditions as well as showing the proper generalization of Rayleigh's and Thomson's hundred year old theorems to include boundary pressures.

Murray's law has shaped understanding of biological transport networks including animals and plants \cite{sherman1981connecting,mcculloh2003water}. However, the derivation of Murray's law has until now been heuristic, ignoring both the coupling between flows and conductances (i.e. assuming flows remain constant while conductances are optimized), and the potential for different boundary conditions on the network \cite{murray1926physiological,durand2007structure}. Our work establishes Murray's law as a necessary condition for networks globally minimizing a complementary dissipation function (\ref{func:tot_energy}), and for minimal dissipative networks under general boundary conditions. As subsidiary steps we remormulated Thomson's principle and Rayleigh's principle for networks with general boundary conditions.

Minimally dissipative networks with flow boundary conditions have been studied both theoretically and numerically\cite{bohn2007structure,durand2007structure,katifori2010damage}. However the effect of pressure boundary conditions upon network structure seems to have received little scrutiny. Imposing pressure rather than flow boundary conditions can be convenient when dealing with complex networks in which only a small part of the entire network may be mapped, for example in high resolution cerebrovascular imaging, which is currently being used to understand the connection between brain function and vascular development or damage\cite{blinder2013cortical}. It may be appropriate to apply pressure boundary condition at the vertices making up periphery of the mapped network. Here monotonicity and boundedness results derived from our extension of Rayleigh's theorem can provide useful estimation tools, and insight into the effect, for example, of adding addition pressure vertices to a cardiovascular network.

Our work is also among the first to elucidate differences between imposing the total material as a constraint or penalty on minimally dissipative networks. Historically Murray derived his law based on a material penalty formulation \cite{murray1926physiological}, but later work treated material as a constraint \cite{bohn2007structure,katifori2010damage,durand2007structure}. Our results show that for minimally dissipative networks these formulations are equivalent, and so recent results are consistent with Murray's original derivation. However the equivalence of the two formulations hinges on two key results: 1. That flows in physical networks minimize complementary dissipation, which is equivalent in tree-networks to minimizing dissipation. 2. Optimal networks are trees. However, these two results can not be appealed to when optimizing other functions on networks. Indeed for general target functions and constraints the formulation one chooses has fundamental effects on the optimal network; as we demonstrated when we optimize flow uniformity, optimal networks may only exist for one formulation and not for the other. Moreover the optimal network may also vary quantitatively as a function of the total allowed material, and possibly with the coefficient of material penalty. Our previous work on optimizing flow uniformity showed evidence of phase transitions as penalty coefficients varied (in preparation). For general functions and constraint one needs to pick analyze the physics and biology carefully to find the appropriate formulation.

Minimal dissipation arguments give theoretical insights in biological network \cite{murray1926physiological}, but are not universal explanatory tools. It has been shown that the leaf vascular network and slime mold network are designed for robustness \cite{katifori2010damage,tero2010rules} and fungus network for mixing \cite{roper2013nuclear}. Moreover even when we seek to minimize dissipation, our function $f$ may be non-newtonian. For example the effective viscosity of blood changes with the cell concentration and with vessel radius \cite{pries2005microvascular}, and it is possible that Murray's law has to be modified in this occasion. The techniques we present in this paper might be generalized to establish the modified Murray's law as a necessary condition for the minimal dissipative networks. Our previous work on zebrafish embryo showed that the uniformity of blood flow is maintained at the cost of dissipation \cite{chang2015optimal}. While numerical algorithms have been designed for finding optimal networks other than minimal dissipation \cite{katifori2010damage}, to our best knowledge there is no theoretical result on the morphology of optimal networks under other functions with either material constraint or penalty. Critically the results presented here draw extensively on monotonicity and boundedness results that cannot be readily generalized to the general case. New methodology is needed to deduce theoretical results for general functions.

\section{Acknowledgments}

This research was funded by grants from the NSF (under grant DMS-1351860). MR. SSC was also supported by the National Institutes of Health, under a Ruth L. Kirschstein National Research Service Award (T32-GM008185). The contents of this paper are solely the responsibility of the authors and do not necessarily represent the official views of the NIH. MR also thanks Eleni Katifori and Karen Alim for useful discussions, and the American Institute of Mathematics for hosting him during one part of the development of this paper.

\appendix\section{Well-posedness of Kirchhoff's laws}

For completeness we give a proof on well-posedness of Kirchhoff's laws. If the network has several connected components we can prove that each component has a unique Kirchhoff flow so without loss of generality we can consider a connected network $G$, i.e. $\forall k,l\in G \; \exists k_1,...,k_n$ s.t. $\langle k_i,k_{i+1}\rangle = 1, \kappa_{k_i k_{i+1}}>0 \; \forall i=1,...,n-1$ and $k_1 = k, k_n = l$, where $\kappa_{kl}$ denotes the conductance of the link $kl$. Now we write down the Kirchhoff system

\begin{equation}
Dp = b
\end{equation}

\noindent where

\begin{equation}
D_{kl} \doteq \left\{\begin{array}{llll}
\sum_{l,\langle k,l\rangle =1} \kappa_{kl} & k=l, k\notin \Pn\\
-\kappa_{kl} & \langle k,l\rangle =1, k\notin \Pn\\
1 & k=l, k\in \Pn\\
0 & o.w.
\end{array}\right.
\label{eq:unifsimp_kappamatrix}
\end{equation}

\noindent and

\begin{equation}
b_k = \left\{\begin{array}{lll}
q_k & k\in \F\\
\bar{p}_k & k\in \Pn\\
0 & o.w.
\end{array}\right..
\end{equation}

\noindent Here the notations follow those in Section \ref{sec:notation}. First we show that if $\Pn \neq \phi$ then $D$ is invertible, which is equivalent to showing that

\begin{equation}
Dp = 0 \Rightarrow p =0.
\label{eq:no_flow_pressure}
\end{equation}

\noindent The solution $p$ for Equation (\ref{eq:no_flow_pressure}) corresponds to a network where we do not have any flows into the system except possibly at vertices with pressure boundary conditions prescribed zero pressures, denoted by $\Pn$. The goal is to show that $p_k = 0 \; \forall k$. Suppose for contradiction that $\exists i\notin \Pn$ s.t. $p_i \neq 0$ (since we already have $p_j = 0 \; \forall j\in \Pn$). Then we would have $Q_{kl} \neq 0$ for some $\langle k,l\rangle =1$ since the network is connected, and without loss of generality let $Q_{kl}>0$. Now we can trace this flow throughout the network in the following procedure:

\begin{enumerate}
\item Given that $Q_{k_{n-1} k_n} >0$ first check if $k_n \in \Pn$, and stop if this is the case.
\item Consider all vertices $l$ s.t. $\langle k_n,l\rangle = 1$. According to Kirchhoff's first law there must be an $l$ s.t. $Q_{k_n l} >0$. Since the network is finite we can pick e.g. the smallest $l$ satisfying these conditions and let $k_{n+1} = l$.
\item Repeat the procedure until $k_N \in \Pn$ for some $N$ and stop. 
\end{enumerate}

If we start with $k_1 = k, k_2 = l$ we can initiate the process since the first condition is satisfied. This procedure has to stop eventually because the network is finite and that $k_1,...,k_n$ are all distinct for any given $n>1$. To see this suppose $k_n = k_m$ with $m>n$. Then we would have $p_n>p_{n+1}>\cdots > p_m = p_n$, a contradiction. Thus we would end up with a chain of distinct vertices $k_1,k_2,...,k_N$ with $\langle k_n,k_{n+1}\rangle =1, Q_{k_n k_{n+1}} > 0 \; \forall n=1,...,N-1$, and $N\in \Pn$. Now we repeat the same procedure just with $k'_1 = l, k'_2 = k$ to trace the flows upstream, and we would end up with another chain $k'_1,k'_2,...,k'_{N'}$ with $\langle k'_n,k'_{n+1}\rangle =1, Q_{k'_n k'_{n+1}} < 0 \; \forall n=1,...,N'-1$, and $N' \in \Pn$. Notice that there is no repetition in the set $\{k_1,...,k_N,k'_1,...,k'_{N'}\}$  since $k_n = k'_m$ would lead to the same contradiction due to loop flow. Now we have a loop flow starting and ending at vertices in $\Pn$, a contradiction since all vertices in $\Pn$ have pressure zero. Therefore we have $p_k = 0 \; \forall 1\leq k \leq M$ and $D$ is invertible. Now suppose $\Pn = \phi$ but $\sum_{k\in \F} q_k = 0$. We want to show that solutions $p$ exist and are determined up to an additive constant, so the flows $Q_{kl}$ are uniquely determined. Notice that if we replace say the last row of $D$ by $e_V$ then $D$ is invertible by the previous argument, so rank$(D) \geq V-1$. Also notice that $\sum_k D_{kl} = 0 \; \forall 1\leq l \leq V$, so rand$(D) = V-1$. Now without loss of generality let vertex $1 \in \F$ and we want to find a solution (if $\F = \phi$ then $p = (0,...,0)^T$ is a solution). If we change the first row of $D$ to $e_1$ it is equivalent to setting $1\in \Pn$ with $\bar{p}_1 = q_1$, which admits a unique solution $p'_k$ by previous argument. Now calculate

\begin{equation}
0 = \sum_{\langle k,l\rangle =1} (p'_k-p'_l)\kappa_{kl} = \sum_k \sum_{l\,:\,\langle k,l\rangle =1} Q'_{kl} = \sum_{l\,:\,\langle 1,l\rangle =1} Q'_{1l} + \sum_{k\in \F, k\neq 1} q_k
\end{equation}

\begin{equation}
\Rightarrow \sum_{l\,:\,\langle 1,l\rangle =1} Q'_{1l} = q_1
\end{equation}

\noindent so $p'_k$ is a solution to the original linear system. By $\sum_k D_{kl} = 0 \; \forall 1\leq l \leq V$ and rand$(D) = V-1$ we know that the null space of $D$ is $\{(a,...,a)^T|a\in \R\}$, so the general solution is

\begin{equation}
p_k = p'_k + a \quad \forall 1\leq k \leq V
\end{equation}

\noindent for every $a\in \R$. Thus $p$ is determined up to a constant and the flow is uniquely determined.

Finally we show that there is no solution of $p$ when $\sum_{k\in \F} q_k \neq 0$. This is straight-forward since suppose for contradiction that $\exists p \in \R^V$ s.t.

\begin{equation}
Dp = b.
\end{equation}

\noindent Then if we multiply both side from the left by $(1,...,1)$ then since $(1,...,1)D = 0$ we gent

\begin{equation}
0 = \sum_{k\in \F} q_k,
\end{equation}

\noindent a contradiction.

\end{document}